\def\CBV{\text{\sl CBV}}
\def\ignore#1{}
\def\wt{\widetilde}
\newcommand{\ud}{\,\mathrm{d}}
\def\eps{\varepsilon}
\def\Ind#1{{\mathbbmss 1}_{_{\scriptstyle #1}}}
\definecolor{DarkGreen}{rgb}{0.2,0.6,0.2}
\def\blue#1{\textcolor{black}{#1}}
\def\green#1{\textcolor{DarkGreen}{#1}}
 \def\green#1{}
\title{The associativity rule in pathwise functional It\^o calculus}
\author{ \normalsize Alexander Schied\thanks{
		Department of Statistics and Actuarial Science, University of Waterloo. E-mail: {\tt alex.schied@gmail.com}} \and\setcounter{footnote}{6}\normalsize Iryna Voloshchenko\footnote{Department of Mathematics, University of Mannheim. E-mail: {\tt irynaice@gmail.com}}}
\date{ \normalsize May 21, 2018}
\begin{document}	

\maketitle 

\begin{abstract} 
In this paper we establish the associativity property of the pathwise It\^o integral in a functional setting for continuous   integrators. 
Here, associativity refers to the computation of the It\^o differential of an It\^o integral, by means of the intuitive cancellation of the It\^o differential and integral signs.   \end{abstract}

\theoremstyle{definition} 
\newtheorem{definition}{Definition}[section]
\newtheorem{rem}{Remark}[section]
\newtheorem{ex}{Example}[section]
\newtheorem{pr}{Problem}[section]
\newtheorem{assump}{Assumption}[section]

\theoremstyle{plain}
\newtheorem{thm}{Theorem}[section]
\newtheorem{lemma}{Lemma}[section]
\newtheorem{prop}{Proposition}[section]
\newtheorem{cor}{Corollary}[section]

\section{Introduction} 

 Dupire~\cite{Dupire} and  Cont and Fourni\' e~\cite{CF,CF2} have recently introduced a new type of stochastic calculus,  known as \emph{functional It\^o calculus}. It is based on an extension  of the classical  It\^o formula to functionals depending on the entire past evolution of the underlying path, and not only on its current value.  
The approach taken in~\cite{CF} is a direct extension of the non-probabilistic It\^o formula of  F\"ollmer~\cite{Ito_F} to non-anticipative functionals on Skorokhod space. These functionals are required to possess certain directional derivatives which may be computed pathwise, but no Fr\' echet differentiability  is imposed. 
An alternative approach, which to some extent still relies on probabilistic arguments, was introduced by  Cosso and  Russo~\cite{Russo}; it  is based on the theory of stochastic calculus via regularization~\cite{Russo2,Fabbri,GR1,GR2,GR3,GR4}.

In recent years, pathwise It\^o calculus has been particularly popular in mathematical finance and economics; see, e.g.,~\cite{bender,bick,Davis,F2001,F&S,Schied07,SPVol15,Sondermann}.
This is due to the fact that  the results derived with the help of pathwise It\^o calculus  are robust with respect to model risk that might stem from a misspecification of  probabilistic dynamics.  The only assumption on the underlying paths is that they admit  the quadratic variation in the sense of~\cite{Ito_F}.

Our first contribution in this paper is a slight extension of the functional change of variables formula from~\cite{CF},  which 
is motivated by the fact that  functionals of interest often depend on additional arguments such as quadratic variation, moving average, or running maximum of the underlying path. These quantities, however, are often not regular enough to fit fully into the framework of~\cite{CF} (see also the discussions in~\cite{Ananova,CF2,Dupire}).   
To this end, we will allow our functionals $F$ to depend on an additional variable $A$ that corresponds to a general path of bounded variation,  and then  extend the notions of  the horizontal and vertical derivatives to functionals of this type.
This extension  will also be crucial for the proof of our associativity rule. 

Our main result will be a functional version of the associativity rule for the pathwise It\^o integral. To describe this result informally, consider a continuous path $X:[0,T]\to\mathbb R$ that admits the continuous quadratic variation  in the sense of F\"ollmer \cite{Ito_F}. Then, if $\xi$ is an integrand for which we can form the pathwise It\^o integral  $Y(t):= \int_0^t \xi(s)\ud X(s)$, and $\eta$ is another integrand for which we can form $\int_0^t\eta(s)\ud Y(s)$,  we have the intuitive cancellation property
$$
\int_0^T \eta (s)\ud  Y(s) = \int_0^T \eta(s)\xi(s)\ud  X(s). 
$$
This cancellation rule is often called the \emph{associativity} of the integral.
 Note that in standard stochastic calculus it follows immediately from an application of the Kunita--Watanabe characterization of the stochastic integral.  In the pathwise setting, however, an analogue of the Kunita--Watanabe characterization is not available, and the proof of the associativity property becomes surprisingly involved, as only analytical tools are at our disposition.   In \cite[Theorem 13]{Schied13} a pathwise associativity result was obtained in the case in which $\xi(t)$ and $\eta(t)$ are functions of $X(t)$ and $Y(t)$, respectively. In our present functional context, $\xi(t)$ and $\eta(t)$ may now depend on the stopped \emph{paths} $(X(s\wedge t))_{s\in[0,T]}$ and $(Y(s\wedge t))_{s\in[0,T]}$. The crucial difference to the situation considered in \cite{Schied13} is that this functional dependence must be retained by writing $\xi(t,X)$ and $\eta(t,Y)$. This dependence must moreover  satisfy several regularity properties, because  the corresponding It\^o integrals are based on Riemann sums of  $\xi(t,X^n)$ and $\eta(t,Y^n)$, where $X^n$ and $Y^n$ are approximations of $X$ and $Y$. Our corresponding result is  Theorem~\ref{assoccont}. It is the main result of this paper. Its proof uses an entirely different strategy than the one for \cite[Theorem 13]{Schied13}, and, when put into the context of \cite{Schied13}, can also considerably 
 simplify that proof. Our result moreover corrects some errors in \cite{Voloshchenko}.

Just as in standard stochastic calculus, associativity is a fundamental property of the It\^o integral and crucial for many applications. For instance, in~\cite{Schied13}, a basic version of  the associativity rule was derived so as to give a pathwise treatment of constant-proportion portfolio insurance strategies (CPPI). Our original motivation for deriving an associativity rule within functional It\^o calculus  stems from the fact that it is helpful for analyzing functionally dependent strategies in a pathwise version of stochastic portfolio theory; see our companion paper~\cite{SPVol15}.

The paper is organized as follows. 
In Section~\ref{Sect2.1} we provide the basic notions for non-anticipative functionals in a way which slightly extends the notions introduced in \cite{Dupire} and \cite{CF}.  In Section~\ref{subsect 2.2} we provide a corresponding change of variables formula  for non-anticipative functionals depending  on an additional bounded variation component. With this at hand, we can state and show in Section~\ref{associativity section} the associativity rule for the pathwise functional It\^o integral. All proofs are given in Section~\ref{proofs section}.

\section{Preliminaries and statement of results} \label{Model} 

\subsection{Non-anticipative functionals and functional derivatives on spaces of paths} \label{Sect2.1} 

In the following, we will  first describe our framework. We essentially follow~\cite{Dupire,CF} and  slightly modify and extend the definitions and notations given there. This applies in particular to the Definitions~\ref{regularity properties def},~\ref{horizontal der def},~\ref{Def.grad}, and~\ref{Cjk}.

In the sequel, we fix $T>0$ and  open sets $U\subset  \mathbb{R}^d$  and $S \subset \mathbb{R}^m$. By $\text{\sl CBV}([0,T],S)$ we will denote the set of those continuous $S$-valued functions whose components are of bounded variation on $[0,T]$. We will write $D([0,T],U)$ for the usual Skorokhod space of $U$-valued c\`{a}dl\`{a}g functions $X$ on $[0,T]$ with left limits $X(t-):=\lim_{s\uparrow t}X(s)\in U$. For $X\in D([0,T],U)$ and $t\in [0,T]$, we let $X^t=(X(t\wedge s))_{s\in[0,T]}$  denote the  path stopped in $t$. The space $D([0,T],U)$ will be equipped with the following supremum norm,
\begin{equation} 
\|X\|_\infty=\sup_{u\in[0,T]}\arrowvert X(u)\arrowvert \qquad\text{for $X\in   D([0,T],U) $.}
\label{dist}\end{equation}
 A functional $F:[0,T]\times D([0,T],U)\times   \text{\sl CBV}([0,T],S)  \mapsto \mathbb{R}$ is called \emph{non-anticipative} if 
\begin{equation}
 F(t, X, A)=F(t, X^t, A^t)
\label{nonanticip}\end{equation}
for all $(t,X, A)\in [0,T]\times D([0,T],U)\times   \text{\sl CBV}([0,T],S)$.
This definition of non-anticipativity is analogous to the one in \cite{CF}. 
In addition, however, it allows $F$ to depend on the path $A$ of bounded variation. This additional path should not be confused with the dependence on the arbitrary c\`{a}dl\`{a}g path $v$ in \cite{CF}, which due to its predictable dependence does not give rise to additional derivatives. Examples for $A$ we have in mind are a running maximum $A(t)=\max_{u\leq t} X(u)$  or the quadratic variation $A(t)=[X](t)$ of a trajectory $X$, which may not be  absolutely continuous in $t$. Functionals depending on such  quantities are not directly covered by the It\^o formula from~\cite{CF}.  The ability to deal with such functionals, however, will be crucial for  our proof of the associativity rule. They also naturally appear in many applications to mathematical finance. The following definition recalls regularity properties introduced in \cite{CF} 
and presents them in our slightly modified setup.

\begin{definition} \label{regularity properties def}Let $F$ be a non-anticipative functional on $[0,T]\times D([0,T],U)\times\text{\sl CBV}([0,T],S)$.
\begin{enumerate}
\item $F$ 
is called \emph{boundedness-preserving}   if for every $A\in \text{\sl CBV}([0,T],S)$ and any compact subset $K\subset U$  there exist a constant $C$ such that 
\begin{equation} 
\arrowvert F(t,X,A)\arrowvert \le  C
 \label{bdd}\end{equation}
for all $t\in [0,T]$ and  $X\in D([0,T],K)$.

\item $F$ is called \emph{continuous at fixed times}, if for all $\eps>0$, $t\in[0,T]$, $X\in  D([0,T],U)$, and $A\in \text{\sl CBV}([0,T],S)$, there exists $\eta>0$ such that $|F(t,X,A)-F(t,Y,A)|<\eps$ for all $Y\in D([0,T],U)$ for which $\|X^t-Y^t\|_\infty<\eta$.

\item   $F$ 
is called \emph{left-continuous}  if for all $t\in(0,T]$, $\eps>0$, $X\in D([0,T],U)$, and $A\in \text{\sl CBV}([0,T],S)$, there exists $\eta>0$ such that 
 \begin{equation} 
|F(t,X,A)-F(t-h,Y,A)|<\eps
\label{leftcont}\end{equation}
for all $h\in[0,\eta)$ and $Y\in D([0,T],U)$ for which
$\|X^t-Y^{t-h}\|_\infty<\eta$.

\end{enumerate}
  \end{definition}
  
  \begin{rem}\label{boundedness preserving rem}Suppose that the non-anticipative functional $F$ is  boundedness-preserving. Then for all $X\in D([0,T],U)$ and $A\in \text{\sl CBV}([0,T],S)$ there are $C>0$ and $\eta>0$ such that $\|X-Y\|_\infty<\eta$ implies that 
  \begin{equation} 
\sup_{t\in[0,T]} \arrowvert F(t, Y,A)\arrowvert\leq C.
  \label{locbdd}\end{equation}

  \end{rem}

 The additional dependence of $F(t,X,A)$ on the component $A\in\CBV([0,T],S)$ gives rise to the following notion of a horizontal derivative, which extends the corresponding notion from  \cite{Dupire} and   \cite{CF}.

\begin{definition}[\textbf{Horizontal derivative}] \label{horizontal der def}
Let $F$ 
be a non-anticipative functional on  $[0,T]\times  D([0,T],U)\times \text{\sl CBV}([0,T],S) $. We say that $F$ is \emph{horizontally  differentiable}, 
if there exist  non-anticipative and boundedness preserving functionals $\mathscr{D} _0F, \mathscr{D} _{1}F,\dots, \mathscr{D} _{m}F$ on $[0,T]\times  D([0,T],U)\times \text{\sl CBV}([0,T],S) $ such that for $0\le s<t\le T$ and 
$(X,A)\in  D([0,T],U)\times \text{\sl CBV}([0,T],S) $, the functions $[s,t]\ni r\mapsto \mathscr{D} _iF(r,X^s,A)$ are Borel measurable and
\begin{align*}
 F(t,X^s,A)-F(s,X^s,A)=\sum_{i=0}^m\int_s^t\mathscr{D} _iF(r,X^s,A)\, A_i(dr),
\end{align*}
where we put $A_0(r):=r$.
In this case, the vector-valued functional
$$\mathscr{D}F=\left(\mathscr{D} _0F, \mathscr{D} _{1}F,\dots, \mathscr{D} _{m}F\right) $$
is called the \emph{horizontal derivative} of $F$.
\end{definition}

\begin{rem} In Definition~\ref{horizontal der def}, the horizontal derivative was defined as a Radon--Nikodym derivative with respect to the signed vector measure $(\ud r,\ud A_1(r),\dots,\ud A_m(r)) $. It follows, e.g., from \cite{Jeffrey} that $F$ will be horizontally differentiable with horizontal derivative $\mathscr{D}F=\left(\mathscr{D} _0F, \mathscr{D} _{1}F,\dots, \mathscr{D} _{m}F\right) $ if the following limits exist for all $(t,X,A)$ and if they give rise to locally bounded and non-anticipative functionals  on $[0,T]\times  D([0,T],U)\times \text{\sl CBV}([0,T],S) $ satisfying the measurability requirement from Definition~\ref{horizontal der def},
\begin{align}
\mathscr{D} _0F(t,X^t,A^t)&=\lim_{h\downarrow 0}\frac{F(t+h,X^{t},A^{t})-F(t,X^{t},A^{t})}{h}\label{Russomodif}\\
 \mathscr{D} _{k}F(t, X^{t}, A^{t})&=\lim_{h\downarrow 0}\frac{F(t,X^{t},A_1^{t},\dots, A_k^{t+h},\dots,A_m^{t})-F(t,X^{t},A^{t})}{A_k(t+h)-A_k(t)} \Ind{\{A_k(t+h)\neq A_k(t)\}},
 \label{D-1} 
 \end{align}
for $ k=1,\;\dots, m$. If $F$ does not depend on $A$, then \eqref{Russomodif} yields the horizontal derivative defined in \cite{CF}.
\end{rem}

The following definition is identical to the one given in \cite{Dupire,CF}.

\begin{definition}[\textbf{Vertical derivative}] \label{Def.grad}
 
A non-anticipative functional $F$ 
is said to be \emph{vertically differentiable} at $(t,X,A) $ if the map
$
\mathbb{R}^d\ni v\to F(t, X+v\Ind{[t,T]}, A^t)
$
is differentiable at $0$. The \emph{vertical derivative} of $F$ at $(t,X,A) $ will then be the gradient of that map at $v=0$. It will be denoted by
$$  \nabla_X F(t,X,A)=\left( \partial _i F(t,X,A)\right)_{i=1,\dots, d},
$$
where 
$$ \partial _i F(t,X,A)=\lim_{h\to0}\frac{F(t,X+he_i\Ind{[t,T]},A)-F(t,X,A)}{h}.
$$
If $F$ is vertically differentiable at all $(t,X,A)$, the map
\begin{align}
\nabla_X F:[0,T]\times D([0,T],U)\times  \text{\sl CBV}([0,T],S) &\longmapsto \mathbb{R}^d\notag\\
 (t,X,A)&\longrightarrow  \nabla_X F(t,X,A)\label{grad1} 
 \end{align}
is a non-anticipative functional  with values in  $ \mathbb{R}^d$ and called the \emph{vertical derivative of $F$}.
\end{definition}

%
\begin{ex}\label{Regularity example}
Sometimes, a quantity of interest can either be considered as a path-dependent functional of  $X\in D([0,T],U)$ only or as an additional trajectory of bounded variation. The latter possibility allows us to include functionals that may not be regular enough for the setting of~\cite{CF} or~\cite{Dupire}.
This illustrates one advantage of our extended approach. See also \cite[Section 3]{SPVol15} for a discussion of a related but more sophisticated situation in the context of model-free stochastic portfolio theory. For the following examples let us assume $d=1$. \begin{enumerate}
\item Consider the time average  of $X$, 
$$F(t, X)=\int_0^t X(s)\ud s.$$
Alternatively, this functional can be represented as  
$$G(t,X,A)=G(t,A)=A(t)\qquad\text{for $A(t):=\int_0^t X(s)\ud s$.}$$  
In the first approach, we have $\mathscr{D}F(t,X)=X(t-)$ and $\nabla_X F(t,X)=0.$ 
In the second approach, we have $\mathscr{D}G(t,X,A)=(0,1)  $  and $\nabla _XG(t,A,X)=0.$ Thus, both approaches work here.

\item Consider the running maximum of the first component,
$$F(t, X)=\underset{0 \leq s \leq t}{\max} \, X(s).$$
Alternatively, this functional can be represented as  
$$G(t,X,A)=G(t,A)=A(t)\qquad\text{for $A(t):=\max\limits_{0 \leq s \leq t}  X(s).$}$$
Then, $F$ is not (vertically) differentiable in the first approach, and we would have to resort to smoothing techniques~\cite{Dupire}.
In the second approach, however, the horizontal derivative  \blue{exists} and we have as before that $\mathscr{D}G(t,A)=(0,1) $  and  $\nabla_XG(t,A)=0.$ Functionals involving the running maximum appear in  mathematical finance, e.g., for lookback or barrier options and functionals involving the maximal drawdown.

\item Consider the functional 
\blue{$$F(t, X)=[X^c](t)+\sum_{s\le t}(\Delta_sX)^2,$$
where $\Delta_sX:=X(s)-X(s-)$ and $[X^c]$ is the pathwise quadratic variation of the continuous part $X^c$ of $X$ (see Definition~\ref{QV1d}). Note that $F$ is defined only on a suitable subset of $D([0,T],\mathbb R)$.}
Alternatively, this functional can be represented as 
$$G(t,X,A)=G(t,A)=A(t)\qquad\text{for $A(t):=[X](t).$}$$
In the first approach, we have  $\mathscr{D}F(t,X)=0$ and $\nabla_X F(t,X)=2(X(t)-X(t-))$.  In particular, we have  $\mathscr{D}F(t,X)=0$ and $\nabla_X F(t,X)=0$ if $X$ is continuous, so that this approach may not work.  In the second approach, we have again  $\mathscr{D}G(t,A)=(0,1) $ and $\nabla_X G(t,A)=0.$ Functionals involving the quadratic variation appear in mathematical finance, e.g., for options on realized variance or volatility \cite{Davis}.
\end{enumerate}
\end{ex}
If the functional $F$ admits  horizontal and vertical derivatives $ \mathscr{D} F$ and $ \nabla_X F$, we may iterate the corresponding operations so as to define \blue{higher-order} horizontal and vertical derivatives. Note that our horizontal derivatives are boundedness-preserving by definition. The following definition is a modification and extension of \cite[Definition 9]{CF}.

\begin{definition} \label{Cjk}
We denote by $\mathbb{C}^{1,2}_b(U,S)$ the set of all non-anticipative functionals $F
$ on $[0,T]\times D([0,T],U)\times\text{\sl CBV}([0,T],S)$ such that:
\begin{enumerate}
\item $F$ is left-continuous, horizontally differentiable, and twice vertically differentiable;
\item the horizontal derivative $\mathscr DF$ is  continuous at fixed times;
\item the vertical derivatives $\nabla_XF$ and $\nabla^2_XF$ are left-continuous and boundedness-preserving.
\end{enumerate}
\end{definition}

\begin{rem}Taking $h=0$ in Definition~\ref{regularity properties def} (c), we see that part (c) in Definition~\ref{Cjk} implies the continuity of the function $\mathbb R^d\ni v\mapsto F(t,X+v\Ind{[t,T]},A)$. Thus, Schwarz's theorem implies that the second partial vertical derivatives,
$$\partial_{ij}F(t,X,A):=\partial_i\big(\partial_j F(t,X,A)\big)
$$
are symmetric: 
$$\partial_{ij}F(t,X,A)=\partial_{ji}F(t,X,A),\qquad i,j=1,\dots, d.
$$
\end{rem}

\subsection{Functional It\^o formula with additional components of bounded variation} \label{subsect 2.2}

In this section, we present a functional It\^o formula for functionals $F\in\mathbb{C}^{1,2}_b(U,S)$, which slightly extends the functional It\^o formulas from \cite{Dupire} and \cite{CF}. This extension will be needed in particular for the proof of our associativity formula. It also has several other potential applications, notably in mathematical finance. Before stating this result, we recall now the notion of quadratic variation in the sense of F\"ollmer~\cite{Ito_F}. To this end, we fix from now on  a refining sequence of partitions  $\left(\mathbb{T}_n\right)=\left(\mathbb{T}_n\right)_{n\in\mathbb{N}}$ of $[0,T]$. That is, each $\mathbb T_n$ is a finite subset of $[0,T]$ such that $0,1\in \mathbb T_n$, we have $\mathbb T_1\subset\mathbb T_2\subset\cdots$, and the mesh of $\mathbb T_n$ tends to zero as $n\uparrow\infty$. For fixed $n$ and given $t\in\mathbb T_n$, we denote by  $t'$ the successor of $t$ in $\mathbb{T}_n$, i.e.,  
$$t'=\begin{cases}\min\{u\in\mathbb{T}_n\,|\,u>t\}&\text{if $t<T$,}\\
T&\text{if $t=T$.}
\end{cases}
$$

\begin{definition}[\bf{Quadratic variation}]\label{QV1d}
Let $X\in C([0,T], \mathbb{R}^d)$.
\begin{enumerate}
\item If $d=1$,
  we say that $X $ admits the  \emph{continuous quadratic variation} $[ X]$ along $\left(\mathbb{T}_n\right)_{n\in\mathbb{N}}$ if for all $t\in(0,T]$
  the sequence 
  \begin{equation} 
 \sum_{s\in \mathbb{T}_n\atop s\le t} (X(s')-X(s))^2\label{covariation def}\end{equation}
converges to a finite limit, denoted $[ X] (t)$, and if $t\mapsto [ X] (t)$ is continuous for $[X](0):=0$. 
\item Let $d>1$. 
We say that  $X$ admits the \emph{continuous quadratic variation} $[X]$ along  $\left(\mathbb{T}_n\right)$ if all real-valued functions $X_i+X_j,\; i,j=1,\dots,d,$ admit the continuous quadratic variation $[X_i+X_j]$ in the sense of (a). In this case, we set $[X]:=([X_i,X_j])_{i,j=1,\dots,d}$ for 
\begin{align} 
[ X_i,X_j](t):=\frac{1}{2}\Big([ X_i+X_j](t)  - [ X_i](t)-[ X_j] (t)\Big).
\label{QVm}\end{align}
\end{enumerate}

\end{definition}

\bigskip

In the context of (b), we clearly have $[X_i,X_i]=[X_i]$.
Note moreover that the quadratic variation depends on the  choice of the refining sequence of partitions $(\mathbb T_n)$ and that the existence of the quadratic variations $[X_i]$ and $[X_j]$ does not imply the existence of $[X_i+X_j]$ and, hence, of $[X_i,X_j]$; see, e.g., the discussion in \cite{Schied_Takagi}.

 We can now state our It\^o formula for functionals in $\mathbb C^{1,2}_b(U,S)$, which slightly extends the one from \cite{Dupire,CF}. 
 
\begin{thm} \label{cvfcont} Let us fix a path $X\in C([0,T],U)$ with continuous quadratic variation, a path $A\in\text{\sl CBV}([0,T],S)$, and  a functional $F\in \mathbb{C}^{1,2}_b(U,S)$.
For $n\in\mathbb N$, define the approximating path $X^n\in D([0,T],U)$ by
 \begin{equation} 
X^n(t):=\sum_{s\in \mathbb{T}_n}X(s')\mathbbm{1}_{[s,s')}(t)+X(T)\Ind{\{T\}}(t),\qquad 0\leq t\leq T,
\label{Xn1}\end{equation}
and let $X^{n, s-}:=\lim_{r\uparrow s}X^{n, r}$. Then  the \emph{pathwise It\^o integral} along $\left(\mathbb{T}_n\right)$, \begin{equation} \label{Itointcont}
\int_0^T \nabla_XF(s, X, A)\ud X(s):=\lim_{n\uparrow\infty} \sum_{s\in \mathbb{T}_n}\nabla_XF(s,X^{n,s-}, A)\cdot\left(X(s')-X(s)\right),
\end{equation}
 exists and, with $A_0(t)=t$,
 \begin{equation}\label{Ito formula}
\begin{split} 
F(T,X ,A )-F(0,X ,A )&=\int_0^T \nabla_XF(s,X , A )\ud X(s)+\sum_{i=0}^m\int_0^T\mathscr{D}_iF(s,X,A)\ud A_i( s)\\
&\qquad+ \frac{1}{2}\sum_{i,j=1}^d\int_0^T\partial_{ij} F(s, X  ,A)\ud [X_i,X_j](s),
\end{split}
 \end{equation}
 where the two rightmost integrals are Lebesgue--Stieltjes integrals.
\end{thm}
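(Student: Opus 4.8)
The plan is to adapt the strategy of F\"ollmer~\cite{Ito_F} and Cont and Fourni\'e~\cite{CF} to the presence of the additional bounded-variation component $A$. Fix $n$ and write $\mathbb{T}_n=\{0=t_0<t_1<\cdots<t_N=T\}$, so that $t_i'=t_{i+1}$, and start from the telescoping identity
\[
F(T,X^n,A)-F(0,X^n,A)=\sum_{i=0}^{N-1}\big[F(t_{i+1},X^{n,t_{i+1}},A)-F(t_i,X^{n,t_i},A)\big],
\]
which is valid because non-anticipativity gives $F(t,X^n,A)=F(t,X^{n,t},A)$. First I would record the combinatorial consequence of the look-ahead construction~\eqref{Xn1}: one verifies that $X^{n,t_i}=X^{n,t_{i+1}-}$ in $D([0,T],U)$ (both equal $X^n$ on $[0,t_i)$ and are frozen at the level $X(t_{i+1})$ on $[t_i,T]$), whereas $X^{n,t_{i+1}}=X^{n,t_{i+1}-}+\Delta_i\,\Ind{[t_{i+1},T]}$ with $\Delta_i:=X(t_{i+1}')-X(t_{i+1})$. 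Inserting the intermediate value $F(t_{i+1},X^{n,t_i},A)$ then splits each summand into a \emph{horizontal} increment, in which only time and $A$ evolve while the path is frozen, and a \emph{vertical} increment, in which the path jumps at the single point $t_{i+1}$.

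For the horizontal part I would apply Definition~\ref{horizontal der def} to the frozen path, giving $F(t_{i+1},X^{n,t_i},A)-F(t_i,X^{n,t_i},A)=\sum_{k=0}^m\int_{t_i}^{t_{i+1}}\mathscr{D}_kF(r,X^{n,t_i},A)\,A_k(\ud r)$. Summing over $i$ and sending $n\uparrow\infty$, the frozen paths converge uniformly to the stopped paths of $X$; since $\mathscr{D}F$ is continuous at fixed times (Definition~\ref{Cjk}(b)) and boundedness-preserving, and since all approximating paths remain in a fixed compact $K\subset U$ (as $X([0,T])$ is compact in the open set $U$), dominated convergence against the total-variation measures of the $A_k$ and against Lebesgue measure for $k=0$ yields the limit $\sum_{k=0}^m\int_0^T\mathscr{D}_kF(r,X,A)\,A_k(\ud r)$.

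For the vertical part I would expand $v\mapsto F(t_{i+1},X^{n,t_{i+1}-}+v\,\Ind{[t_{i+1},T]},A)$ to second order, which is legitimate by twofold vertical differentiability, with remainder evaluated at $\xi_i=X^{n,t_{i+1}-}+\theta_i\Delta_i\Ind{[t_{i+1},T]}$, $\theta_i\in[0,1]$. The first-order terms sum to the Riemann sums defining the integral in~\eqref{Itointcont} up to the $s=0$ boundary contribution, which vanishes because $X(0')-X(0)\to0$ while $\nabla_XF$ stays bounded, and the second-order terms equal $\tfrac12\sum_{a,b=1}^d\sum_i\partial_{ab}F(t_{i+1},\xi_i,A)\,\Delta_i^{(a)}\Delta_i^{(b)}$. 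Because the left-hand side of the telescoping identity converges to $F(T,X,A)-F(0,X,A)$ (by left-continuity of $F$ with $h=0$, i.e.\ continuity at fixed times, at $t=0$ and $t=T$, using $\|X-X^n\|_\infty\to0$), and because the horizontal and the second-order sums converge, the first-order terms and hence the Riemann sums converge as well; this is how I would simultaneously obtain the \emph{existence} of the pathwise It\^o integral and the identity~\eqref{Ito formula}.

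The main obstacle is the convergence of the second-order sum to $\tfrac12\sum_{a,b=1}^d\int_0^T\partial_{ab}F(s,X,A)\,\ud[X_a,X_b](s)$. I would handle it in two stages. The elementary stage is that, by polarization from Definition~\ref{QV1d}, the cumulative cross sums $\sum_{s\in\mathbb{T}_n,\,s\le t}(X_a(s')-X_a(s))(X_b(s')-X_b(s))$ converge to $[X_a,X_b](t)$; for $a\neq b$ this is exactly where the existence of $[X_a+X_b]$ is used. The delicate stage is to replace the evaluation path $\xi_i$ by the true path $X$ inside the coefficient $\partial_{ab}F$, using only that $\nabla^2_XF$ is left-continuous and boundedness-preserving (Definition~\ref{Cjk}(c)) and \emph{not} uniformly continuous. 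Since $\|\xi_i-X^{t_{i+1}}\|_\infty\to0$ uniformly in $i$, left-continuity yields the pointwise convergence $\partial_{ab}F(t_{i+1},\xi_i,A)\to\partial_{ab}F(s,X,A)$ along the refining partitions, while boundedness-preservation provides a uniform bound; the crux is to upgrade this to convergence of the associated Stieltjes sums against the measures $\ud[X_a,X_b]$, uniformly over the partition. This interchange of limits, carried out in the spirit of~\cite{Ito_F,CF}, is the technical heart of the proof.
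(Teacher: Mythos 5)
Your proposal is correct and follows essentially the same route as the paper's proof: the same telescoping over $\mathbb T_n$, the same splitting into a horizontal increment (handled via Definition~\ref{horizontal der def} plus boundedness-preservation and continuity at fixed times) and a vertical increment treated by a second-order Taylor expansion, with existence of the integral \eqref{Itointcont} obtained as the residual limit; the index shift in your decomposition relative to the paper's is immaterial, and you correctly dispose of the resulting $s=0$ boundary term. The ``technical heart'' you isolate at the end --- passing to the limit in the second-order Stieltjes sums with merely left-continuous, pointwise-convergent, uniformly bounded coefficients against the discrete quadratic-variation measures --- is exactly the step the paper delegates to \cite[Proposition 1]{CF} and \cite[Lemma 12]{CF}, applied once to the main quadratic term and once to show the Lagrange-remainder contribution vanishes.
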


\subsection{Associativity of the functional It\^o integral}\label{associativity section}

\blue{If $X\in C([0,T],U)$ admits the continuous quadratic variation $[X]$, then} Theorem~\ref{cvfcont} allows us in particular to define the  pathwise functional It\^o integral 
\begin{equation}\label{xi ito int eq}
\int_0^t \xi(s, X)\ud X(s):=\lim_{n\uparrow\infty} \sum_{s\in \mathbb{T}_n,\, s\le t}\xi(s,X^{n,s-})\cdot\left(X(s')-X(s)\right)
\end{equation}
for any functional $\xi:[0,T]\times D([0,T],U)\to\mathbb R^ d$ that is of the form
$$\xi(s,\blue{\wt X})=\nabla_XF(s,\blue{\wt X},A),\qquad \blue{\wt X\in D([0,T],U),}$$
for some $F\in\mathbb{C}^{1,2}_b(U,S)$ and $A\in\CBV([0,T],S)$. In contrast to standard pathwise It\^o calculus as in \cite{Ito_F,Sondermann,Schied13}, however, it is not clear \emph{a priori} whether the It\^ o integral in \eqref{xi ito int eq} is a continuous function of the upper integration bound~$t$. Lemma~\ref{regularity lemma} (c) states that this continuity condition will be guaranteed if $F$ satisfies the following additional regularity condition.

\begin{definition}
A non-anticipative functional  $F$ on $[0,T]\times D([0,T],U)\times\CBV([0,T],S)$ is called \emph{continuous in $X$  locally uniformly in $t$,}  if for all $\eps>0$ and $(t,X,A) \in [0,T]\times D([0,T],U)\times\text{\sl CBV}([0,T],S)$ there is some $\eta>0$ such that 
$$|F(u,X,A)-F(u,Y,A)|<\eps
$$
for all $(u,Y) \in [0,T]\times D([0,T],U)$ for which
$\|X -Y \|_\infty<\eta$ and $  |t-u|<\eta$. With $\mathbb C^{1,2}_c(U,S)$ we denote the class of all functionals $F\in \mathbb C^{1,2}_b(U,S)$ that are continuous in $X$  locally uniformly in $t$ and boundedness preserving.
\end{definition}

 As discussed in \cite{Ananova}, the main difference to standard pathwise It\^o calculus as in \cite{Ito_F,Sondermann,Schied13} is that the sums on the right-hand side of \eqref{xi ito int eq}
 are not ordinary Riemann sums but based on the approximations $X^n$ of $X$. See \cite[Theorem 3.2]{Ananova} for sufficient conditions under which these sums can be replaced by ordinary Riemann sums. 
 Here we will work with the following notion of an admissible integrand, which was suggested in \cite{Ananova}.
 
 \begin{definition}\label{locadmcont} A functional $\xi:[0,T]\times D([0,T],U)\to\mathbb R^d$ is called an \emph{admissible functional integrand} if there exist $m\in\mathbb N$, an open set $S\subset\mathbb R^m$, $A\in\CBV([0,T],S)$, and $F\in\mathbb{C}^{1,2}_c(U,S)$  such that $\xi(t,X)=\nabla_XF(t,X,A)$. If, moreover, $X\in C([0,T],U)$ is a path  with continuous quadratic variation, then  the It\^o integral of $\xi(t,X)$ against $X$ will be defined through \eqref{xi ito int eq}.
 \end{definition}

 Let $\xi_{(1)},\dots,\xi_{(\nu)}:[0,T]\times D([0,T],U)\to\mathbb R$ be admissible  functional  integrands.  For $X\in C([0,T],U)$ with continuous quadratic variation $[X]$ we define $Y=\left(Y_{1},\dots,Y_{\nu}\right)\in C([0,T],\mathbb R^\nu)$  through 
\begin{equation} 
Y_{\ell}(t):=\int_0^t\xi_{(\ell)}(s,X)\ud X(s),\quad \ell=1,\dots, \nu.
\label{Yellcont}\end{equation}
Then  
results from standard \blue{pathwise} It\^o calculus such as \cite[Proposition 2.3.3]{Sondermann} or \cite[Proposition 12]{Schied13} suggest that the continuous path $Y$
should admit the continuous quadratic variation
\begin{align} 
 [ Y_{k}, Y_{\ell} ](t)&=\sum_{i,j=1}^d\int_0^t\xi_{(k),i}(s,X)\xi_{(\ell),j}(s,X)\ud [ X_i,X_j](s),\qquad k,\ell=1,\dots,\nu.
\label{Yellcontcov}
\end{align} 
In functional It\^ o calculus, however, this is property is not immediately clear.  Ananova and Cont \cite[Theorem 2.1]{Ananova} recently gave sufficient conditions on $F$ and $X$ under which  \eqref{Yellcontcov} is satisfied. These conditions consist  mainly in a Lipschitz condition on $F$ with respect to $\|\cdot\|_\infty$ and the H\"older continuity of  $X$ with an exponent $\alpha>(\sqrt 3-1)/2$. \blue{In part (c) of the following theorem,} we will assume the validity of \eqref{Yellcontcov}  instead of imposing the conditions from \cite[Theorem 2.1]{Ananova}. 

Now we can state our associativity formula for the functional pathwise It\^o integral. It extends \cite[Theorem 13]{Schied13}, where an associativity formula for standard pathwise It\^o calculus was proved. Note that our current proof strategy can also be used to simplify the proof given in \cite{Schied13}, as we work here without the approximation arguments employed in \cite{Schied13}.

\begin{thm} \label{assoccont} Let $\xi_{(1)},\dots,\xi_{(\nu)}:[0,T]\times D([0,T],U)\to\mathbb R$ be admissible  functional  integrands.  For $X\in C([0,T],U)$ with continuous quadratic variation $[X]$ we define $Y=\left(Y_{1},\dots,Y_{\nu}\right)\in C([0,T],\mathbb R^\nu)$  through \eqref{Yellcont}.
Then the following assertions hold:
\begin{enumerate}
\item There exist $q\in\mathbb N$, an open set $\widetilde S\subset\mathbb R^q$, $\widetilde A\in \CBV([0,T],\widetilde S)$, and $\widetilde F_1,\dots,\widetilde F_\nu\in\mathbb C^{1,2}_c(U,\widetilde S)$ such that 
\begin{equation}\label{Yca}
Y(t)=\widetilde F(t,X,\widetilde A):=\big(\widetilde F_1(t,X,\widetilde A),\dots,\widetilde F_\nu(t,X,\widetilde A)\big)
\end{equation}
\item If $\eta:[0,T]\times C([0,T],\mathbb R^\nu)\to\mathbb R^\nu$ is an admissible  functional  integrand and $\widetilde F$ is as in {\rm(a)}, then   $\zeta:[0,T]\times  C([0,T],U)\to\mathbb R^d$, defined through
$$\zeta(t,\widetilde X):=\sum_{\ell=1}^\nu \eta_\ell\big(t,\widetilde F(\cdot,\widetilde X,\widetilde A)\big)\xi_{(\ell)}(t,\widetilde X),\qquad \widetilde X\in D([0,T],U),$$
is well defined and an admissible  functional  integrand.
\item If $Y$ admits the continuous quadratic variation \eqref{Yellcontcov} and $\zeta$ is as in {\rm(b)}, then the following associativity formula holds:
\begin{align} 
\int_0^T \eta (s,Y)\ud  Y(s) &= \int_0^T\zeta(t,X)\ud X(t)=\int_0^T\sum_{\ell=1}^\nu \eta_\ell(s,Y)\xi_{(\ell)}(s,X)\ud  X(s).\label{assco}
 \end{align}
\end{enumerate}
\end{thm}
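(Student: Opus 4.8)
The plan is to establish the three parts in order, using the functional It\^o formula (Theorem~\ref{cvfcont}) as the main engine together with a functional chain rule for compositions. For part~(a), I would write each admissible integrand as $\xi_{(\ell)}(t,X)=\nabla_XF_\ell(t,X,A_\ell)$ with $F_\ell\in\mathbb{C}^{1,2}_c(U,S_\ell)$ and $A_\ell\in\CBV([0,T],S_\ell)$, and apply Theorem~\ref{cvfcont} to each $F_\ell$ along $X$. This writes $Y_\ell(t)=\int_0^t\xi_{(\ell)}\ud X$ as $F_\ell(t,X,A_\ell)-F_\ell(0,X,A_\ell)-R_\ell(t)$, where $R_\ell$ is the sum of the horizontal and second-order It\^o terms and is a continuous path of bounded variation, being built from Lebesgue--Stieltjes integrals against $A_\ell$ and against the quadratic-variation paths $[X_i,X_j]$ (each of which is continuous and of bounded variation). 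I would then collect the components of $A_1,\dots,A_\nu$ together with all $[X_i,X_j]$ into a single $\widetilde A\in\CBV([0,T],\widetilde S)$ and set $\widetilde F_\ell(t,\widetilde X,\widetilde A):=F_\ell(t,\widetilde X,A_\ell)-F_\ell(0,\widetilde X,A_\ell)-R_\ell(t,\widetilde X,\widetilde A)$, reading $R_\ell$ as a non-anticipative functional of $(\widetilde X,\widetilde A)$. Since both $R_\ell$ and $F_\ell(0,\widetilde X,A_\ell)$ have vanishing vertical derivative for $t>0$, one obtains $\nabla_{\widetilde X}\widetilde F_\ell=\xi_{(\ell)}$; the remaining task is to check that $\widetilde F_\ell\in\mathbb{C}^{1,2}_c(U,\widetilde S)$, its horizontal derivative being $\mathscr DF_\ell$ corrected by the Radon--Nikodym rates defining $R_\ell$ and its second vertical derivative equalling $\nabla^2_X F_\ell$, while the continuity and boundedness properties are inherited from $F_\ell$.

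For part~(b), writing the admissible integrand $\eta$ as $\eta(t,Z)=\nabla_ZG(t,Z,B)$ with $G\in\mathbb{C}^{1,2}_c$ a functional of $\mathbb R^\nu$-valued paths $Z$ and $B\in\CBV$, I would put $\widehat A:=(\widetilde A,B)$ and define the composition $\widehat F(t,\widetilde X,\widehat A):=G\big(t,\widetilde F(\cdot,\widetilde X,\widetilde A),B\big)$. The key device is a chain rule for the vertical derivative: perturbing $\widetilde X$ by $v\,\Ind{[t,T]}$ leaves the path $Z:=\widetilde F(\cdot,\widetilde X,\widetilde A)$ unchanged on $[0,t)$ and shifts its value at $t$ by $\delta(v)=\nabla_{\widetilde X}\widetilde F(t,\widetilde X,\widetilde A)\cdot v+o(|v|)$, so that non-anticipativity of $G$ gives $\widehat F(t,\widetilde X+v\,\Ind{[t,T]},\widehat A)=G\big(t,Z^t+\delta(v)\,\Ind{[t,T]},B\big)$. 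The ordinary chain rule then yields $\nabla_{\widetilde X}\widehat F=\sum_{\ell=1}^\nu\eta_\ell\big(t,\widetilde F(\cdot,\widetilde X,\widetilde A)\big)\,\xi_{(\ell)}(t,\widetilde X)=\zeta$. Iterating the chain rule for the second vertical derivative and verifying $\widehat F\in\mathbb{C}^{1,2}_c(U,\widehat S)$ then exhibits $\zeta=\nabla_{\widetilde X}\widehat F$ as an admissible functional integrand.

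For part~(c), I would apply Theorem~\ref{cvfcont} twice. Applying it to $G$ along $Y$, which carries the continuous quadratic variation~\eqref{Yellcontcov}, yields $G(T,Y,B)-G(0,Y,B)=\int_0^T\eta(s,Y)\ud Y(s)$ plus the bounded-variation terms of $G$; applying it to $\widehat F$ along $X$ yields $\widehat F(T,X,\widehat A)-\widehat F(0,X,\widehat A)=\int_0^T\zeta(s,X)\ud X(s)$ plus the bounded-variation terms of $\widehat F$. Because $\widetilde F(\cdot,X,\widetilde A)=Y$ by~(a), the two left-hand sides coincide, so the first equality in~\eqref{assco} reduces to matching the two families of bounded-variation terms. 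Substituting the chain-rule formulas $\partial_{ij}\widehat F=\sum_{k,\ell}\partial_{k\ell}G\,\xi_{(k),i}\xi_{(\ell),j}+\sum_\ell\eta_\ell\,\partial_{ij}F_\ell$ and the horizontal contribution $\mathscr D_0G\,\ud s+\sum_i\mathscr D_iG\,\ud B_i+\sum_\ell\eta_\ell\,\ud Y^{\mathrm{hor}}_\ell$, where the horizontal part $\ud Y^{\mathrm{hor}}_\ell=-\tfrac12\sum_{i,j}\partial_{ij}F_\ell\,\ud[X_i,X_j]$ because $Y_\ell$ is a pure It\^o integral, the $\partial_{ij}F_\ell$-terms cancel, while $\tfrac12\sum_{i,j}\partial_{k\ell}G\,\xi_{(k),i}\xi_{(\ell),j}\,\ud[X_i,X_j]$ collapses to $\tfrac12\partial_{k\ell}G\,\ud[Y_k,Y_\ell]$ exactly by~\eqref{Yellcontcov}. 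This matches the two sets of bounded-variation terms and gives $\int_0^T\eta\ud Y=\int_0^T\zeta\ud X$; the last equality in~\eqref{assco} is immediate from $\zeta(t,X)=\sum_\ell\eta_\ell(t,Y)\xi_{(\ell)}(t,X)$.

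The step I expect to be the main obstacle is the rigorous functional chain rule underlying~(b) and~(c): beyond the formal derivative identities, one must show that the composition $\widehat F$ genuinely belongs to $\mathbb{C}^{1,2}_c$ --- that its horizontal and second vertical derivatives exist and are left-continuous, boundedness-preserving, and continuous in $X$ locally uniformly in $t$ --- and one must control the $o(|v|)$ term in the endpoint perturbation so that the vertical chain rule is valid rather than merely formal. It is precisely the extended framework of Section~\ref{Sect2.1}, which admits the quadratic-variation paths as bona fide $\CBV$-components, that makes the decomposition in~(a) available, and the quadratic-variation identity~\eqref{Yellcontcov} supplies the analytic input that replaces the Kunita--Watanabe characterization used in the probabilistic proof.
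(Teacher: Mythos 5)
Your proposal is correct and follows essentially the same route as the paper: part (a) via the functional It\^o formula with the bounded-variation correction terms absorbed into an enlarged $\widetilde A$, part (b) via a vertical/horizontal chain rule for the composition $G(t,\widetilde F(\cdot,X,\widetilde A),B)$ (the paper's Lemma~\ref{chain rule}, whose horizontal part is indeed the delicate step and is handled there by applying Theorem~\ref{cvfcont} along the bounded-variation path $t\mapsto F(t,X^s,A)$ together with Lemma~\ref{Ito Stieltjes lemma}), and part (c) by writing out the It\^o formula for $G$ along $Y$ and for the composed functional along $X$ and matching the finite-variation terms using \eqref{Yellcontcov}. The only (cosmetic) difference is that you adjoin the paths $[X_i,X_j]$ as new $\CBV$-components and read the correction $R_\ell$ as an integral functional, whereas the paper adjoins the already-integrated correction paths $A_{m+\ell}$ themselves, so that $\mathscr D_{m+\ell}\widetilde F_\ell=-\delta_{i\ell}$ is constant and the bookkeeping in (c) is slightly lighter.
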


$ $

The preceding theorem yields the following corollary.

$ $

\begin{cor}\label{ass cor}Let $\xi_{(1)},\dots,\xi_{(\nu)}:[0,T]\times D([0,T],U)\to\mathbb R$ be admissible  functional  integrands and $X\in C([0,T],U)$ with continuous quadratic variation $[X]$. Let, moreover, $S\subset\mathbb R^m$ be open, $A\in\CBV([0,T],S)$, functionals  $F_1,\dots, F_\nu\in\mathbb C^{1,2}_c(U, S)$ and
$$
Y(t):= \big(  F_1(t,X, A),\dots,  F_\nu(t,X, A)\big)
$$ 
be such that it admits the continuous quadratic variation
$$[Y_i,Y_j](t)=\sum_{k,\ell=1}^d\int_0^t\partial_k F_i(s,X,A)\partial_\ell F_j(s,X,A)\ud[X_k,X_\ell](s),\qquad i,j=1,\dots,\nu.
$$
Then, if $\eta:[0,T]\times C([0,T],\mathbb R^\nu)\to\mathbb R^\nu$ is an admissible  functional  integrand, 
\begin{equation}\label{ass cor func int eq}
\sum_{\ell=1}^\nu\eta_\ell\big(t,F(\cdot,\blue{\wt X},A)\big)\nabla_XF_\ell(t,\blue{\wt X},A),\qquad \blue{\wt X\in D([0,T],U),}
\end{equation}
is an admissible functional integrand and
\begin{align*}
\int_0^T \eta (s,Y)\ud  Y(s)&=\int_0^t\sum_{\ell=1}^\nu\eta_\ell\big(s,F(\cdot,X,A)\big)\nabla_XF_\ell(t,X,A)\ud X\\
&\qquad +\sum_{\ell=1}^\nu\bigg(\sum_{i=0}^m\int_0^t\eta_\ell\big(s,F(\cdot,X,A)\big)\mathscr D_iF_\ell(s,X,A)\ud A_i(s)\\
&\qquad +\frac12\sum_{i,j=1}^d\int_0^t\eta_\ell\big(s,F(\cdot,X,A)\big)\partial_iF_\ell(s,X,A)\partial_jF_\ell(s,X,A)\ud[X_i,X_j](s)\bigg)
\end{align*}
\end{cor}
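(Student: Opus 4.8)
The plan is to identify the left-hand side of the asserted identity with the It\^o integral that appears in the functional change of variables formula of Theorem~\ref{cvfcont} when that formula is applied to a suitably composed functional, and then to read off the three terms on the right by means of the functional chain rule. Since $\eta$ is an admissible functional integrand on $C([0,T],\mathbb R^\nu)$, Definition~\ref{locadmcont} furnishes $q\in\mathbb N$, an open set $\widetilde S\subset\mathbb R^q$, a path $\widetilde A\in\CBV([0,T],\widetilde S)$, and a functional $G\in\mathbb C^{1,2}_c(\mathbb R^\nu,\widetilde S)$ with $\eta(t,Y)=\nabla_YG(t,Y,\widetilde A)$. I would then form the composed functional
$$
\Phi(t,X,(A,\widetilde A)):=G\big(t,F(\cdot,X,A),\widetilde A\big),\qquad F:=(F_1,\dots,F_\nu),
$$
on $[0,T]\times D([0,T],U)\times\CBV([0,T],S\times\widetilde S)$. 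Arguing exactly as in the proof of parts {\rm(a)} and {\rm(b)} of Theorem~\ref{assoccont}, one checks that $\Phi\in\mathbb C^{1,2}_c(U,S\times\widetilde S)$ and that its horizontal and vertical derivatives obey the functional chain rule. In particular,
$$
\nabla_X\Phi(t,X,(A,\widetilde A))=\sum_{\ell=1}^\nu\eta_\ell\big(t,F(\cdot,X,A)\big)\,\nabla_XF_\ell(t,X,A),
$$
which is precisely the expression \eqref{ass cor func int eq}; since the vertical derivative of a functional in $\mathbb C^{1,2}_c$ is by definition an admissible functional integrand, this already settles the admissibility claim.

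Next I would apply Theorem~\ref{cvfcont} twice, always with respect to the fixed path $X$ and its continuous quadratic variation. Applied to $G$ along $Y=F(\cdot,X,A)$, which carries the continuous quadratic variation \eqref{Yellcontcov} by hypothesis, it yields
$$
G(T,Y,\widetilde A)-G(0,Y,\widetilde A)=\int_0^T\eta(s,Y)\ud Y(s)+\sum_{i=0}^q\int_0^T\mathscr D_iG(s,Y,\widetilde A)\ud\widetilde A_i(s)+\frac12\sum_{k,\ell=1}^\nu\int_0^T\partial_{k\ell}G(s,Y,\widetilde A)\ud[Y_k,Y_\ell](s).
$$
Applied to $\Phi$ along $X$, it expresses the same increment $G(T,Y,\widetilde A)-G(0,Y,\widetilde A)=\Phi(T,X,(A,\widetilde A))-\Phi(0,X,(A,\widetilde A))$ through the derivatives of $\Phi$. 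Equating the two expansions and solving for $\int_0^T\eta(s,Y)\ud Y(s)$ isolates the quantity of interest.

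It then remains to substitute the chain-rule formulas for the derivatives of $\Phi$. The vertical part contributes $\int_0^T\sum_\ell\eta_\ell\,\nabla_XF_\ell\ud X$, the first term on the right. Collecting the horizontal derivatives of $\Phi$ via the chain rule, the part inherited from $G$ (its explicit time-derivative and its $\widetilde A$-derivatives) cancels the entire horizontal contribution of the $G$-expansion, while the part inherited from the $F_\ell$ (their time- and $A$-derivatives, weighted by $\eta_\ell$) produces the second term $\sum_\ell\sum_{i=0}^m\int_0^T\eta_\ell\,\mathscr D_iF_\ell\ud A_i$. The decisive computation concerns the second vertical derivative, for which the chain rule gives
$$
\partial_{ij}\Phi=\sum_{\ell=1}^\nu\eta_\ell\,\partial_{ij}F_\ell+\sum_{k,\ell=1}^\nu\partial_{k\ell}G\,\partial_iF_\ell\,\partial_jF_k.
$$
Integrating against $\ud[X_i,X_j]$ and using the quadratic-variation hypothesis \eqref{Yellcontcov} in the form $\ud[Y_k,Y_\ell]=\sum_{i,j}\partial_iF_\ell\,\partial_jF_k\ud[X_i,X_j]$, the second sum above reproduces exactly $\frac12\sum_{k,\ell}\int_0^T\partial_{k\ell}G\ud[Y_k,Y_\ell]$ and hence cancels the bracket term of the $G$-expansion. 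What survives is the last term $\frac12\sum_\ell\sum_{i,j}\int_0^T\eta_\ell\,\partial_{ij}F_\ell\ud[X_i,X_j]$; note that the product combination $\partial_iF_\ell\,\partial_jF_k$ is precisely the part absorbed by the bracket cancellation, so the surviving second-order integrand is the genuine second vertical derivative $\partial_{ij}F_\ell$.

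The main obstacle is the opening step: showing that the composition $\Phi$ again belongs to $\mathbb C^{1,2}_c$ and that its horizontal derivative and its first and second vertical derivatives are governed by the functional chain rule. This is the genuinely functional part of the argument and is supplied by the reasoning behind parts {\rm(a)} and {\rm(b)} of Theorem~\ref{assoccont}; everything after it is the bookkeeping of equating the two It\^o expansions and the cancellation of the bracket terms, the latter resting squarely on the standing quadratic-variation assumption \eqref{Yellcontcov}.
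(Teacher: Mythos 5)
Your argument is correct in substance but follows a genuinely different route from the paper's. The paper proves the corollary by \emph{reduction} to Theorem~\ref{assoccont}: it splits off the finite-variation part $B$ of $Y$ supplied by the It\^o formula, so that $\wt Y:=Y-B$ becomes a vector of It\^o integrals $\int_0^t\nabla_XF_\ell\ud X$ to which Theorem~\ref{assoccont}~(c) applies verbatim; the remaining integral $\int\eta(s,Y)\ud B(s)$ is handled by Lemma~\ref{Ito Stieltjes lemma} and the associativity of the Lebesgue--Stieltjes integral, and an auxiliary shifted integrand $\wt\eta(t,Z)=\eta(t,Z+B)$ must be checked to be admissible via Lemma~\ref{chain rule}. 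You instead bypass Theorem~\ref{assoccont}~(c) and re-run the computation underlying its proof directly on the composition $\Phi(t,X,(A,\wt A))=G(t,F(\cdot,X,A),\wt A)$: two applications of Theorem~\ref{cvfcont} (to $G$ along $Y$ and to $\Phi$ along $X$) plus the chain rule of Lemma~\ref{chain rule}, with the quadratic-variation hypothesis \eqref{Yellcontcov} cancelling the $\partial_{k\ell}G$ bracket term against the cross part of $\partial_{ij}\Phi$. This is arguably cleaner --- it avoids the shifted integrand and the extra appeal to Lemma~\ref{Ito Stieltjes lemma} --- at the price of repeating bookkeeping that the paper has already performed once inside the proof of Theorem~\ref{assoccont}~(c). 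All the ingredients you invoke (well-definedness of the composition via Lemma~\ref{regularity lemma}~(b), membership of $\Phi$ in $\mathbb C^{1,2}_c$ together with the derivative formulas via Lemma~\ref{chain rule}, and applicability of Theorem~\ref{cvfcont} to the continuous path $Y$ with its assumed quadratic variation) are available in the paper, so I see no gap in the reasoning.

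One point you should flag explicitly rather than pass over: your derivation produces
\begin{equation*}
\frac12\sum_{\ell=1}^\nu\sum_{i,j=1}^d\int_0^T\eta_\ell\big(s,F(\cdot,X,A)\big)\,\partial_{ij}F_\ell(s,X,A)\ud[X_i,X_j](s)
\end{equation*}
as the last term, whereas the statement (and the definition of $B_\ell$ in the paper's own proof) carries the product $\partial_iF_\ell\,\partial_jF_\ell$ in place of the second vertical derivative $\partial_{ij}F_\ell$; these do not coincide in general. Your version is the correct one: it is what the It\^o formula \eqref{Ito formula} dictates for the finite-variation part of $Y_\ell=F_\ell(\cdot,X,A)$, and it is the only version consistent with the paper's example $F=\log$, where the correction term involves $-1/X^2$ (the second derivative of $\log$) rather than $+1/X^2$ (the square of the first derivative). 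So the discrepancy points to a typo in the stated corollary, not to an error in your argument; state this explicitly so that it is clear why your final formula differs from the one you were asked to prove.
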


To provide a quick,  easy, and relevant example for the possible applications of Theorem~\ref{assoccont} and Corollary~\ref{ass cor}, we consider a situation that appears in the context of our companion paper~\cite{SPVol15}.  

\begin{ex}Let $\eta:[0,T]\times D([0,T],\mathbb R)\to\mathbb R$ be an admissible functional integrand and $X\in C([0,T],(0,\infty))$ be such that $X$ has the continuous quadratic variation $[X]$. Then Corollary~\ref{ass cor} yields that ${\eta(s,\log \blue{\wt X})}/{\blue{\wt X}(s)}$, \blue{$\wt X\in D([0,T],(0,\infty))$,} is an admissible functional integrand and that we have the following change of variables formula:
$$\int_0^t\eta(s,\log X)\ud\log X(s)=\int_0^t\frac{\eta(s,\log X)}{X(s)}\ud X(s)-\frac12\int_0^t\frac{\eta(s,\log X)}{(X(s))^2}\ud[X](s).
$$
In standard stochastic calculus, the preceding identity is normally obtained from the It\^o formula for $\log X$ in conjunction with the Kunita--Watanabe characterization of the stochastic integral. Since the Kunita--Watanabe characterization is not available in our present   context of pathwise functional It\^o calculus, the present example illustrates \blue{\sout{for}} the need \blue{for} the associativity formulas from Theorem~\ref{assoccont} and Corollary~\ref{ass cor}.
\end{ex}


\begin{rem}The definition \eqref{xi ito int eq}, which involves the approximations $X^n$, is not the only conceivable definition of the pathwise functional It\^ o integral. As a matter of fact, \cite[Theorem 3.2]{Ananova} states conditions under which this It\^o integral is equal to the following limit of proper Riemann sums,
$$\int_0^t \xi(s, X)\ud X(s)=\lim_{n\uparrow\infty} \sum_{s\in \mathbb{T}_n,\, s\le t}\xi(s,X)\cdot\left(X(s')-X(s)\right).
$$
In view of this fact, it is important to note that our proof of Theorem~\ref{assoccont}  (c) is only based on the validity of the It\^o formula \eqref{Ito formula}. It does not involve the particular approximation  \eqref{xi ito int eq}. Also in this respect, our current proof improves the one of \cite[Theorem 13]{Schied13}.
\end{rem}

\section{Proofs}\label{proofs section}

\subsection{Proof for Section~\ref{subsect 2.2}}

\begin{proof}[Proof of Theorem~\ref{cvfcont}]
The proof is a variant of the ones in \cite{Dupire} and~\cite[Theorem 3]{CF}.
For $s\in \mathbb{T}_n$, consider the following decomposition of increments into ``horizontal'' and ``vertical'' terms:
\begin{align} 
\lefteqn{F(s',X^{n,s'-},A)-F(s,X^{n,s-},A)}\\
&=F(s',X^{n,s'-},A)-F(s,X^{n,s},A)+F(s,X^{n,s},A)-F(s,X^{n,s-},A).
\label{decompcont}\end{align}
Since $X^{n,s'-}=X^{n,s}$, we can re-write the first difference on the right as 
\begin{align}
F(s',X^{n,s'-},A)-F(s,X^{n,s},A)&=\sum_{i=0}^d \int_s^{s'}\mathscr{D}_iF(r,X^{n,s},A)\,\ud A_i(r) .
\label{decompcont1}\end{align}
For the second term on the right-hand side of \eqref{decompcont}, we have
\begin{equation} 
F(s,X^{n,s},A)-F(s,X^{n,s-},A)=F\big(s,X^{n,s-}+\delta X^n_s\Ind{[s,T]},A\big)-F(s,X^{n,s-},A),
\label{decompcont2}\end{equation}
where
$\delta X_{s}^n:=X(s')-X(s)$.
Hence, a second-order Taylor expansion  yields
\begin{align} 
F(s,X^{n,s},A)-F(s,X^{n,s-},A)&=
\nabla_XF(s,X^{n,s-}, A)\cdot\delta X_{s}^n\notag\\
&\quad+\: \frac{1}{2} \sum_{i,j=1}^d\partial_{ij} F(s,X^{n,s-}, A)(\delta X_{s}^n)_i(\delta X_{s}^n)_j  +r_n(s)
\label{phiTaylor}\end{align}
where
for some numbers $\theta_{ij,s}\in[0,1],$ 
 \begin{align}\label{remainder term in Ito formula}
r_n(s):=\frac{1}{2}\sum_{i,j=1}^d\bigg(\partial_{ij} F\Big(s,X^{n,s-}+\theta_{ij,s}\delta X_s^n\Ind{[s,T]}, A\Big) - \partial_{ij} F(s,X^{n,s-}, A)\bigg)(\delta X_{s}^n)_i(\delta X_{s}^n)_j.\end{align} 

We now sum over $s\in \mathbb{T}_n$ and investigate the limit as $n\uparrow\infty$. 

First, the left-hand side of \eqref{decompcont} sums up to $F(T,X^{n,T-},A)-F(0,X^{0},A)$, which    converges to $F(T,X^{T-},A)-F(0,X,A)$, due to the continuity of $F$ at time $T$. Since $X$ is continuous, this limit is equal  to $F(T,X,A)-F(0,X,A).$

Second, for the first term on the right-hand side of \eqref{decompcont}, we get  with \eqref{decompcont1} that
\begin{equation} 
\begin{split}
 \sum_{s\in \mathbb{T}_n} \Big(F(s',X^{n,s'-}A)-F(s,X^{n,s},A)\Big)=\sum_{i=0}^d \int_0^T\mathscr{D}_iF(r,X^{n,s(r)},A)\,\ud A_i(r)
\end{split}
\label{decompcont1_2}\end{equation}
where $s(T):=T$ and  $s(r)$ is defined for $r<T$ as that  $s\in\mathbb T_n$ for which  $u\in[s,s')$.
Since the horizontal derivatives are continuous at fixed times, we get $\mathscr{D}_iF(r,X^{n,s(r)},A)\to \mathscr{D}_iF(r,X, A)$ for each $r$ as $n\uparrow\infty$.  Moreover, the integrands $\mathscr{D}_iF(r,X^{n,s(r)},A)$ are bounded uniformly in $r$ and $n$ by Remark~\ref{boundedness preserving rem}. Therefore, \eqref{decompcont1_2} converges to 
$$ \sum_{i=0}^m\int_0^T\mathscr{D}_iF(s,X,A)\ud A_i(s).
$$

Third, for the second term on the right-hand side of \eqref{decompcont}, we have  
\begin{equation}\label{decompcont2_1}
\begin{split}
\lefteqn{\sum_{s\in\mathbb{T}_n} \Big(F(s,X^{n,s},A)-F(s,X^{n,s-},A)\Big)}\\
&=\sum_{s\in \mathbb{T}_n}\nabla_XF(s,X^{n,s-}, A)\cdot\delta X_{s}^n +\: \frac{1}{2} \sum_{s\in \mathbb{T}_n}\sum_{i,j=1}^d \partial_{ij} F(s,X^{n,s-}, A)(\delta X_{s}^n)_i(\delta X_{s}^n)_j +\sum_{s\in \mathbb{T}_n}r_n(s). 
\end{split}
\end{equation}
Let $\varphi_n(u):=\nabla^2_X F(s(u),X^{n,s(u)-}, A)$ where $s(u)$ is as above. Then the functions $\varphi_n$ are uniformly bounded according to Remark~\ref{boundedness preserving rem}. Next, the left continuity of $\nabla^2_XF$ implies that $\varphi_n(u)\to \nabla^2_X F(u,X, A)$ as $n\uparrow\infty$. Furthermore,  \cite[ Proposition 1]{CF} and the left continuity of $\nabla^2_XF$ imply that both $\varphi_n(u)$ and $\nabla^2_X F(u,X, A)$ are left-continuous functions of $u$. Therefore,   \cite[ Lemma 12]{CF} yields that 
\begin{align} 
\sum_{s\in \mathbb{T}_n}  \sum_{i,j=1}^d \partial_{ij} F(s,X^{n,s-}, A)(\delta X_{s}^n)_i(\delta X_{s}^n)_j\longrightarrow\sum_{i,j=1}^d\int_0^T\partial_{ij} F(s,X, A)\ud [ X_i,X_j](s). 
\label{decompcont2_2}\end{align}

Fourth, we deal with the remainder terms $r_s^n$. To this end, we let
$$\rho^n_{ij}(u):=\bigg(\partial_{ij} F\Big(s(u),X^{n,s(u)-}+\theta_{ij,s(u)}\delta X_{s(u)}^n\Ind{[s(u),T]}, A\Big) - \partial_{ij} F(s(u),X^{n,s(u)-}, A)\bigg),
$$
where $s(u)$ is as above.  The left continuity of $\nabla_X^2F$, the fact that $|\theta_{ij,n}\delta X_s^n|\le |\delta X_s^n|\to 0$, and  Remark~\ref{boundedness preserving rem} imply that $\rho^n_{ij}(u)\to0$ boundedly as $n\uparrow\infty$. Moreover, \cite[ Proposition 1]{CF} implies again the left continuity of $\rho^n_{ij}(u)$. Thus, an application of \cite[ Lemma 12]{CF} yields that 
$$\sum_{s\in\mathbb T_n}r_n(s)= \sum_{s\in\mathbb T_n}\rho^n_{ij}(s)(\delta X_n)_i(\delta X_n)_j\longrightarrow0\qquad\text{as $n\uparrow\infty$.}
$$

Finally, putting everything together implies that all terms converge. Therefore, the limit
\[
\lim_{n\to\infty} \sum_{s\in\mathbb{T}_n}\nabla_XF(s,X^{n,s-}, A)\cdot\left(X(s')-X(s)\right)
\]
must also exist, and the theorem follows.\end{proof}

\subsection{Proofs for Section~\ref{associativity section}}

We start with a simple consequence of Theorem~\ref{cvfcont}.

\begin{lemma}\label{Ito Stieltjes lemma} Let $\xi:[0,T]\times D([0,T],U)\to \mathbb R^d$ be an admissible functional integrand and   $B\in\CBV([0,T],U)$. Then the It\^o integral 
\begin{equation}\label{CBV Ito eq}
\int_0^T\xi(s,B)\ud B(s)=\lim_{n\uparrow\infty} \sum_{s\in \mathbb{T}_n}\xi(s,B^{n,s-})\cdot\left(B(s')-B(s)\right)
\end{equation}
exists and is equal to the Lebesgue--Stieltjes integral of $s\mapsto\xi(s, B)$ with respect to $\ud B(s)$.
\end{lemma}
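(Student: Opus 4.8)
The plan is to show that when the integrator is itself a path of bounded variation, the pathwise It\^o integral defined through the approximating sums \eqref{CBV Ito eq} collapses to the ordinary Lebesgue--Stieltjes integral. The natural strategy is to invoke Theorem~\ref{cvfcont} directly with the roles of $X$ and $A$ adjusted: since $B\in\CBV([0,T],U)$ is continuous and of bounded variation, it admits a continuous quadratic variation in the sense of Definition~\ref{QV1d}, and that quadratic variation vanishes identically, $[B_i,B_j]\equiv0$. This is the key structural fact, and it is what makes the second-order term in the It\^o formula disappear.

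First I would verify that $B$ is an admissible integrator for Theorem~\ref{cvfcont}, i.e.\ that $B\in C([0,T],U)$ has continuous quadratic variation. Since each component $B_i$ is continuous and of bounded variation, the sums $\sum_{s\le t}(B_i(s')-B_i(s))^2$ are bounded by $(\max_s|B_i(s')-B_i(s)|)\cdot\mathrm{Var}(B_i)$, and the maximal increment tends to $0$ as the mesh shrinks by uniform continuity; hence $[B_i](t)=0$ for every $t$, and likewise $[B_i+B_j]\equiv 0$, so by \eqref{QVm} the full matrix $[B]$ vanishes. Next, writing $\xi(s,X)=\nabla_XF(s,X,A)$ for the $F\in\mathbb C^{1,2}_c(U,S)$ and $A\in\CBV([0,T],S)$ furnished by admissibility (Definition~\ref{locadmcont}), I would apply Theorem~\ref{cvfcont} to the pair $(B,A)$ and the functional $F$. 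The theorem guarantees that the limit defining $\int_0^T\nabla_XF(s,B,A)\ud B(s)=\int_0^T\xi(s,B)\ud B(s)$ exists, which is exactly the existence claim in \eqref{CBV Ito eq}, and it gives the It\^o formula \eqref{Ito formula}. In that formula the second-order sum drops out because $\ud[B_i,B_j]\equiv 0$, leaving
$$
\int_0^T\xi(s,B)\ud B(s)=F(T,B,A)-F(0,B,A)-\sum_{i=0}^m\int_0^T\mathscr D_iF(s,B,A)\ud A_i(s).
$$

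It then remains to identify this right-hand side with the Lebesgue--Stieltjes integral $\int_0^T\xi(s,B)\ud B(s)=\int_0^T\nabla_XF(s,B,A)\ud B(s)$ in the classical sense. The cleanest route is to observe that for a bounded-variation integrator the classical (componentwise, first-order) change of variables already holds: the elementary chain rule for functionals along a path of finite variation with zero quadratic variation yields precisely $F(T,B,A)-F(0,B,A)=\int_0^T\nabla_XF\,\ud B+\sum_i\int_0^T\mathscr D_iF\,\ud A_i$, so rearranging gives the asserted equality. Alternatively, one can argue directly that the Riemann--Stieltjes sums $\sum_{s}\xi(s,B^{n,s-})\cdot(B(s')-B(s))$ converge to the Lebesgue--Stieltjes integral, using the local uniform continuity of $\xi$ in $X$ (from $F\in\mathbb C^{1,2}_c$, via the continuity-in-$X$-locally-uniformly-in-$t$ property) to control the difference between $\xi(s,B^{n,s-})$ and $\xi(s,B)$ as the mesh vanishes, together with the uniform boundedness of $\xi$ from Remark~\ref{boundedness preserving rem}.

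I expect the main obstacle to be the final identification step rather than the existence of the limit. Specifically, matching the It\^o-formula expression with the Lebesgue--Stieltjes integral requires confirming that the same $\nabla_XF\ud B$ term appears in both, which is transparent only once one knows the classical finite-variation change of variables is compatible with the functional derivatives $\nabla_XF$ and $\mathscr DF$; care is needed because $\nabla_XF$ is a vertical derivative while the bounded-variation increments of $B$ are "horizontal" in flavor. A clean way to sidestep any ambiguity is to establish the direct convergence of the Riemann--Stieltjes sums to the Lebesgue--Stieltjes integral (the alternative route above), and then use the already-proven existence of the limit from Theorem~\ref{cvfcont} to conclude that the two limits coincide; the regularity packaged into $\mathbb C^{1,2}_c(U,S)$ is exactly what is needed to push this convergence through.
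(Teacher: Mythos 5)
Your existence argument is exactly the paper's: $B$ has vanishing continuous quadratic variation (the paper cites \cite[Proposition 2.2.2]{Sondermann}; your direct estimate via $\max_s|B(s')-B(s)|\cdot\mathrm{Var}(B)$ is the same fact), so Theorem~\ref{cvfcont} applies and the limit in \eqref{CBV Ito eq} exists. For the identification step you should discard your route (a): as you yourself suspect, there is no off-the-shelf ``classical finite-variation change of variables'' producing $\nabla_XF\,\ud B$ in this functional setting, and deriving one would amount to re-proving the lemma. Your route (b) is the paper's route, but the regularity you invoke is not quite the one that is available: membership in $\mathbb C^{1,2}_c(U,S)$ makes $F$ itself continuous in $X$ locally uniformly in $t$, whereas $\xi=\nabla_XF$ is only guaranteed to be \emph{left-continuous} and boundedness-preserving, so you cannot bound $\sup_s|\xi(s,B^{n,s-})-\xi(s,B)|$ uniformly and factor out $\mathrm{Var}(B)$ as your sketch suggests. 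The paper closes this gap differently: it views the approximating sum as the integral of the left-continuous, pointwise boundedly convergent integrands $u\mapsto\xi(s(u),B^{n,s(u)-})$ against the discrete measures $\mu_n=\sum_{t\in\mathbb T_n}(B(t')-B(t))\delta_t$, notes that $\mu_n\to\ud B$ vaguely, and applies \cite[Lemma 12]{CF}, which is tailored to exactly this combination of pointwise convergence of left-continuous integrands and vague convergence of measures. With that substitution your argument matches the paper's proof; without it, the final convergence claim is not justified by the hypotheses you cite.
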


\begin{proof}Since $B$ admits the continuous quadratic variation $[B_i,B_j]=0$  by \cite[Proposition 2.2.2]{Sondermann}, we may apply Theorem~\ref{cvfcont} so as to obtain the existence of the It\^o integral. Let 
$\mu_n=\sum_{t\in\mathbb T_n}(B(t')-B(t))\delta_t$. Then $\mu_n$ converges vaguely to $\ud B$, since Riemann sums for continuous integrands converge to the Stieltjes integral.  \blue{Moreover, $s\mapsto \xi(s,B^{n,s-})$ is left-continuous and we have $\xi(s,B^{n,s-})\to \xi(s,B)$, due to the left continuity of the functional $\xi$. Also, $\xi(s,B^{n,s-})$ is uniformly bounded in $s$ and $n$by Remark~\ref{boundedness preserving rem}.} Therefore, \cite[Lemma 12]{CF} yields that the right-hand side of \eqref{CBV Ito eq} converges to the  Lebesgue--Stieltjes integral of $s\mapsto \blue{\xi(s, B)}$ with respect to $\ud B(s)$.
\end{proof}

The following lemma uses some ideas from \cite[Proposition 1]{CF}.

\begin{lemma}\label{regularity lemma}
  Let $F\in \mathbb{C}^{1,2}_c(U,S)$.
\begin{enumerate}
\item For all $\eps>0$ and $(X,A) \in  D([0,T],U)\times\text{\sl CBV}([0,T],S)$ there is some $\eta>0$ such that $\|X -Y \|_\infty<\eta$ implies that 
$$\sup_{t\in[0,T]}|F(t,X,A)-F(t,Y,A)|<\eps.
$$
\item For all $X\in D([0,T],U)$ and $A\in\CBV ([0,T],S)$, the function $t\mapsto F(t,X,A)$ is c\`adl\`ag, and its left-hand limit at $t\in(0,T]$ is given by
\begin{equation}\label{left-hand limit F}
\lim_{s\uparrow t}F(s,X,A)=F(t,X^{t-},A).
\end{equation}
\item If  $X\in C([0,T],U)$ admits the continuous quadratic variation $[X]$, then  the pathwise It\^o integral $\int_0^t \nabla_X F(s,X,A)\ud X(s)$ is a continuous function of $t$.

\end{enumerate}
  \end{lemma}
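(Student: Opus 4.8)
The plan is to handle the three parts in order, using part~(a) as the engine for parts~(b) and~(c). For part~(a), the key observation is that the standing hypothesis ``continuous in $X$ locally uniformly in $t$'' supplies, for each $t\in[0,T]$ and each $\eps>0$, a radius $\eta_t>0$ that controls $|F(u,X,A)-F(u,Y,A)|$ \emph{simultaneously} for all $u\in(t-\eta_t,t+\eta_t)$ and all $Y$ with $\|X-Y\|_\infty<\eta_t$. Since $[0,T]$ is compact, I would extract a finite subcover $\{(t_k-\eta_{t_k},t_k+\eta_{t_k})\}_{k}$ of $[0,T]$ and put $\eta:=\min_k\eta_{t_k}$. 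Any $u\in[0,T]$ then lies in some $(t_k-\eta_{t_k},t_k+\eta_{t_k})$, so $\|X-Y\|_\infty<\eta$ forces $|F(u,X,A)-F(u,Y,A)|<\eps$, which is exactly the uniformity over $t$ claimed in part~(a).

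For part~(b) I would prove right-continuity and the left-limit separately, using non-anticipativity $F(s,X,A)=F(s,X^s,A)$ throughout. For right-continuity at $t$, I would split $F(s,X^s,A)-F(t,X^t,A)=\big[F(s,X^s,A)-F(s,X^t,A)\big]+\big[F(s,X^t,A)-F(t,X^t,A)\big]$. The first bracket is controlled by part~(a), once I check $\|X^s-X^t\|_\infty=\sup_{t<u\le s}|X(u)-X(t)|\to0$ as $s\downarrow t$, which is the right-continuity of $X$ at $t$; the second bracket equals $\sum_i\int_t^s\mathscr D_iF(r,X^t,A)\,A_i(dr)$ by Definition~\ref{horizontal der def} and vanishes as $s\downarrow t$ because the integrands are bounded (Remark~\ref{boundedness preserving rem}) and the integrators $A_i$ and $A_0(r)=r$ are atomless. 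For the left-limit I would apply the left-continuity property of Definition~\ref{regularity properties def}\,(c) \emph{at the point} $(t,X^{t-},A)$, with $Y=X$ and $h=t-s$: noting $(X^{t-})^t=X^{t-}$, this gives $|F(t,X^{t-},A)-F(s,X,A)|<\eps$ as soon as $t-s<\eta$ and $\|X^{t-}-X^s\|_\infty=\sup_{u\ge s}|X^{t-}(u)-X(s)|<\eta$, and the latter convergence follows from $X(u)\to X(t-)$ and $X(s)\to X(t-)$ as $s,u\uparrow t$. Together these yield the c\`adl\`ag property with left limit $F(t,X^{t-},A)$.

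For part~(c), where $X$ is continuous, the first gain is that $X^{t-}=X^t$, so the left-limit from~(b) equals $F(t,X^t,A)=F(t,X,A)$ and hence $t\mapsto F(t,X,A)$ is genuinely continuous. The central step is then the localized It\^o formula
$$\int_0^t\nabla_XF(s,X,A)\ud X(s)=F(t,X,A)-F(0,X,A)-\sum_{i=0}^m\int_0^t\mathscr D_iF(s,X,A)\ud A_i(s)-\frac12\sum_{i,j=1}^d\int_0^t\partial_{ij}F(s,X,A)\ud[X_i,X_j](s),$$
which I would obtain by applying Theorem~\ref{cvfcont} to the stopped path $X^t$ (continuous, with $[X^t_i,X^t_j](u)=[X_i,X_j](u\wedge t)$). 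Since $X^t$ is frozen after $t$, the It\^o integral and the quadratic-variation integral automatically truncate to $[0,t]$, while the surplus of the drift term over $(t,T]$ is removed by observing that $F(T,X^t,A)-F(t,X^t,A)=\sum_i\int_t^T\mathscr D_iF(r,X^t,A)\,A_i(dr)$ is precisely the defining identity of the horizontal derivative for the frozen path; non-anticipativity then replaces $X^t$ by $X$ on $[0,t]$. Finally every term on the right is continuous in $t$: $F(\cdot,X,A)$ by the preceding remark, and the Lebesgue--Stieltjes integrals because their integrators $A_i$, $A_0(r)=r$, and $[X_i,X_j]$ are continuous while the integrands are bounded; hence $t\mapsto\int_0^t\nabla_XF\ud X$ is continuous.

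The main obstacle I anticipate is the ``tail cancellation'' in part~(c): one must argue cleanly that feeding the stopped path $X^t$ into the global formula of Theorem~\ref{cvfcont} reproduces exactly the integrals over $[0,t]$, with the contribution over $(t,T]$ collapsing through the horizontal-derivative identity rather than leaving a residual term, and that the partition-sum definition \eqref{xi ito int eq} of the truncated It\^o integral indeed matches the integral produced for $X^t$. A secondary delicate point is the verification of the two supremum-norm path convergences in~(b), where the c\`adl\`ag structure of $X$ (right-continuity for the right-limit, existence of $X(t-)$ for the left-limit) must be used to control the oscillation of $X$ over shrinking intervals.
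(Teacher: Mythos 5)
Your proposal is correct and follows essentially the same route as the paper: part (a) via compactness of $[0,T]$ and a finite subcover of the $\eta_t$-balls, part (b) via the split into a "continuity in $X$" term and a horizontal-derivative integral for right-continuity plus the left-continuity property applied at $(t,X^{t-},A)$ for the left limit, and part (c) by reading off continuity from the It\^o formula since all remaining terms are Lebesgue--Stieltjes integrals against continuous integrators. Your extra care with the localization of Theorem~\ref{cvfcont} to $[0,t]$ (via the stopped path $X^t$ and the tail cancellation) only makes explicit a step the paper leaves implicit.
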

  
  \begin{proof}[Proof of Lemma~\ref{regularity lemma}] 
 For part (a), let $\eps>0$ and $(X,A)$ be given. For every $t\in[0,T]$ there exists some $\eta_t>0$ such that $|F(u,X,A)-F(u,Y,A)|<\eps$ if $\|X -Y \|_\infty<\eta_t$ and $  |t-u|<\eta_t$, \blue{because $F$ is continuous in $X$, locally uniformly in $t$}. The collection of all $\eta_t$-balls, $(t-\eta_t,t+\eta_t)$, with $t\in[0,T]$ covers $[0,T]$. Hence there exists a finite subcover centered at $t_1,\dots, t_n$. Thus, $\eta:=\min\{\eta_{t_1},\dots,\eta_{t_n}\}$ is as desired.
 
For (b), note first that \eqref{left-hand limit F}  follows immediately from the left continuity of $F$ together with the fact that 
$\|X^s-X^{t-}\|_\infty\to 0$ as $s\uparrow t$.
Now we show  right continuity. To this end, we write for $h>0$,
$$F(t+h,X,A)-F(t,X,A)=F(t+h,X,A)-F(t+h,X^t,A)+F(t+h,X^t,A)-F(t,X,A).
$$
Now we investigate the limit $h\downarrow0$. Since $\|X^{t+h}-X^t\|_\infty\to 0$ by the right-continuity of $X$, we get  $F(t+h,X,A)-F(t+h,X^t,A)\to0$  from the fact that $F$ is continuous \blue{in $X$,} locally uniform in $t$. Next, since  the horizontal derivatives are boundedness preserving, we get with $A_0(s):=s$,
\begin{equation}
F(t+h, X^{t}, A)-F(t, X, A)=\sum_{i=0}^m\int_t^{t+h} \mathscr{D}_i F(r, X , A )\ud A_i(r)\longrightarrow 0 .
\label{Fstetig2}\end{equation}
Putting these facts together establishes the right continuity of $t\mapsto F(t,X,A)$. 

Now we prove (c). Since   $A_i(s)$ and $[X_i,X_j](s)$ are continuous functions of bounded variation, the corresponding Lebesgue--Stieltjes integrals are continuous functions of their upper integration limit. Therefore, the It\^o formula \eqref{Ito formula} implies that $\int_0^t \nabla_X F(s,X,A)\ud X(s)$ must be continuous in $t$ as soon as $t\mapsto F(t,X,A)$ is continuous. But this continuity follows from part (b), \eqref{left-hand limit F}, and the continuity of $X$.
  \end{proof}

For the proof of Theorem~\ref{assoccont} we need the following auxiliary lemmas. The first one is a product rule for vertical derivatives, the second one a chain rule for both vertical and horizontal derivatives. Both extend statements from~\cite{Dupire}.

\begin{lemma}\label{product rule} Let $F$ and $G$ be two non-anticipative functionals that are boundedness-preserving, left-continuous, and vertically differentiable with left-continuous and boundedness-preserving vertical derivatives, $\nabla_X F$ and $\nabla_X G$.  Then the product $FG$ is again a non-anticipative vertically differentiable functional such that $FG$ and $\nabla_X(FG)$ are left-continuous and boundedness preserving. Moreover,
  \begin{equation}\label{producteq}
 \nabla_X (FG)=G\nabla_XF  + F \nabla_XG .
 \end{equation}
\end{lemma}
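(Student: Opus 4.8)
The plan is to establish the product rule \eqref{producteq} by working directly from the definition of the vertical derivative, and then verify the three structural properties (non-anticipativity, left-continuity, boundedness-preservation) of the product and its derivative separately. First I would prove the differentiation formula itself. Fix $(t,X,A)$ and $i\in\{1,\dots,d\}$, and consider the difference quotient
$$
\frac{(FG)(t,X+he_i\Ind{[t,T]},A)-(FG)(t,X,A)}{h}.
$$
Writing $X^h:=X+he_i\Ind{[t,T]}$, I would insert the standard mixed term $F(t,X^h,A)G(t,X,A)$ and split the quotient into
$$
F(t,X^h,A)\cdot\frac{G(t,X^h,A)-G(t,X,A)}{h}+G(t,X,A)\cdot\frac{F(t,X^h,A)-F(t,X,A)}{h}.
$$
As $h\to0$, the two difference quotients converge to $\partial_iG(t,X,A)$ and $\partial_iF(t,X,A)$ by the assumed vertical differentiability of $G$ and $F$. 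For the remaining factor $F(t,X^h,A)$ I need $F(t,X^h,A)\to F(t,X,A)$; this is exactly the continuity in $v$ of the map $v\mapsto F(t,X+v\Ind{[t,T]},A)$ at $v=0$, which holds because $F$ is itself vertically differentiable (hence continuous along that direction). Passing to the limit componentwise then gives $\partial_i(FG)=F\partial_iG+G\partial_iF$ for each $i$, which assembles into \eqref{producteq}.

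Next I would check the structural properties. Non-anticipativity of $FG$ is immediate: from \eqref{nonanticip} applied to $F$ and $G$ we get $(FG)(t,X,A)=F(t,X^t,A^t)G(t,X^t,A^t)=(FG)(t,X^t,A^t)$, and the same argument applies to $\nabla_X(FG)$ once the formula \eqref{producteq} is known, since sums and products of non-anticipative functionals are non-anticipative. Boundedness-preservation of $FG$ follows from that of $F$ and $G$: on a compact $K\subset U$ the bounds \eqref{bdd} for $F$ and $G$ multiply to give a bound for $|FG|$. For $\nabla_X(FG)=G\nabla_XF+F\nabla_XG$, I combine the boundedness-preservation of $F,G,\nabla_XF,\nabla_XG$ with the fact that products and sums of boundedness-preserving functionals are again boundedness-preserving (using a common constant on $K$).

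For left-continuity, I would argue that the class of left-continuous, boundedness-preserving functionals is closed under both products and sums, and then apply this to $FG$ and to the two summands $G\nabla_XF$ and $F\nabla_XG$. The key observation is that if $\Phi,\Psi$ are both left-continuous and boundedness-preserving, then so is $\Phi\Psi$: writing
$$
\Phi\Psi(t,X,A)-\Phi\Psi(t-h,Y,A)=\big(\Phi(t,X,A)-\Phi(t-h,Y,A)\big)\Psi(t,X,A)+\Phi(t-h,Y,A)\big(\Psi(t,X,A)-\Psi(t-h,Y,A)\big),
$$
one controls the first factor in each term by left-continuity (making it small) and the second factor by the local boundedness from Remark~\ref{boundedness preserving rem} (keeping it bounded, uniformly for $\|X^t-Y^{t-h}\|_\infty<\eta$). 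Applying this product stability to the pairs $(F,G)$, $(G,\nabla_XF)$, and $(F,\nabla_XG)$, and using that a sum of left-continuous functionals is left-continuous, yields the left-continuity of both $FG$ and $\nabla_X(FG)$.

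The main obstacle I anticipate is not the differentiation formula, which is a routine Leibniz argument, but rather the bookkeeping for left-continuity: one must verify that the local boundedness constant and the $\eta$ from left-continuity can be chosen simultaneously so that the bounded-times-small estimate goes through uniformly in the perturbation $Y$. This is where Remark~\ref{boundedness preserving rem} is essential, since it upgrades pointwise boundedness-preservation to a uniform bound on a $\|\cdot\|_\infty$-neighborhood, which is precisely what is needed to bound $\Psi(t,X,A)$ and $\Phi(t-h,Y,A)$ along the approximating sequence. Once that uniformity is secured, the $\eps/2$ splitting closes the estimate.
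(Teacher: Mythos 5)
Your proof is correct and follows essentially the same route as the paper, which simply records that the Leibniz formula follows from the definition of the vertical derivative and that products of boundedness-preserving, left-continuous functionals retain these properties. Your write-up merely supplies the details (the mixed-term splitting, and the use of Remark~\ref{boundedness preserving rem} to make the bounded-times-small estimate uniform) that the paper treats as immediate.
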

\begin{proof} If $F$ and $G$ are boundedness preserving and left-continuous, then so is their product $FG$. Next, the product rule \eqref{producteq} follows immediately from the definition of the vertical derivative.
Moreover, all functionals appearing on the right-hand side of \eqref{producteq} are boundedness preserving and  left-continuous as the products of such functionals. 
\end{proof}

\begin{lemma}\label{chain rule}Let  $U\subset\mathbb{R}^d$, $V\subset\mathbb{R}^\nu$, $S\subset\mathbb R^m$, and $W\subset\mathbb{R}^p$ be open sets. Let  $F_1,\dots, F_\nu\in \mathbb{C}^{1,2}_c(U,S)$ be  such that 
$$F(t,X,A):=(F_1(t,X,A),\dots,F_\nu(t,X,A)) \in V$$
for all $(t,X,A)\in [0,T]\times D([0,T],U)\times \text{\sl CBV}([0,T],S)$. Suppose moreover that $G\in \mathbb{C}^{1,2}_c(V,W)$.  Then, for $ t\in[0,T]$, $ X\in D([0,T],U) $, $ A\in \text{\sl CBV}([0,T],S)$, and $B\in \text{\sl CBV}([0,T],W) $, the functional 
$$H(t,X,(A,B)):=G(t,F(\cdot ,X,A),B),
$$
is well defined and belongs to the class $\mathbb{C}^{1,2}_c(U,S\times W)$. Its vertical and horizontal derivatives are given as follows:
\begin{align}
\partial_iH(t,X,(A,B))&=\sum_{\ell=1}^\nu \partial_\ell G(t,F(\cdot ,X,A),B)\partial_iF_\ell(t,X,A),\qquad i=1,\dots, d,\label{vertical chain rule}\\
\partial_{ji}H(t,X,(A,B))&=\sum_{\ell=1}^\nu\bigg(\sum_{k=1}^\nu\partial_{\ell k} G(,F(\cdot,X,A),B)\partial_iF_\ell(t,X,A)\partial_jF_k(t,X,A)\nonumber\\
&\qquad\qquad+\partial_\ell G(t,F(\cdot,X,A),B)\partial_{ji}F_\ell(t,X,A)\bigg)\label{vertical chain rule order 2}\\
 \mathscr{D}_0H(t,X,(A,B))&=\mathscr{D}_0G(t,F(\cdot,X,A),B)+\sum_{\ell=1}^\nu\partial_\ell G(t,F(\cdot,X,A),B)\mathscr{D}_0F_\ell(t,X,A)\label{chain rule horizontal derivatives 0}\\
\mathscr{D}_iH(t,X,(A,B))&=\sum_{\ell=1}^\nu\partial_\ell G(t,F(\cdot,X,A),B)\mathscr{D}_iF_\ell(t,X,A),\quad i=1,\dots,m,\label{chain rule horizontal derivatives 1}\\
\mathscr{D}_jH(t,X,(A,B))&=\mathscr{D}_{j-m}G(t,F(\cdot,X,A),B),\quad j=m+1,\dots, m+p.\label{chain rule horizontal derivatives 2}
\end{align}

\end{lemma}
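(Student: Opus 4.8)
The plan is to verify each asserted formula by reducing it to the ordinary chain rule, to the product rule for vertical derivatives (Lemma~\ref{product rule}), and to the functional change of variables formula (Theorem~\ref{cvfcont}), and then to collect the regularity properties that place $H$ in $\mathbb C^{1,2}_c(U,S\times W)$. Throughout I abbreviate the inner $V$-valued path by $Z:=F(\cdot,X,A)$, i.e. $Z(r)=F(r,X,A)$; by Lemma~\ref{regularity lemma}(b) the map $r\mapsto F(r,X,A)$ is c\`adl\`ag, and it takes values in $V$ by hypothesis, so $Z\in D([0,T],V)$ and $H(t,X,(A,B))=G(t,Z,B)$ is well defined. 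Non-anticipativity of $H$ is inherited from that of $G$ and $F$: since $G(t,Z,B)=G(t,Z^t,B^t)$ and, for $r\le t$, $Z^t(r)=F(r\wedge t,X^t,A^t)$ by the non-anticipativity of $F$, one checks $H(t,X,(A,B))=H(t,X^t,(A^t,B^t))$.

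For the vertical derivatives, the key observation is that a vertical perturbation of $X$ induces a vertical perturbation of $Z$ \emph{at the single time $t$}. Indeed, replacing $X$ by $X+v e_i\Ind{[t,T]}$ changes $F(r,\cdot,A)$ only for $r\ge t$, so, using the non-anticipativity of $G$ to stop the inner path at $t$, one gets $H(t,X+v e_i\Ind{[t,T]},(A,B))=G(t,Z+\Delta^v\Ind{[t,T]},B)$ with $\Delta^v:=F(t,X+v e_i\Ind{[t,T]},A)-F(t,X,A)\in\mathbb R^\nu$. Since each $F_\ell$ is vertically differentiable, $\Delta^v$ is differentiable in $v$ at $0$ with velocity $(\partial_iF_\ell(t,X,A))_{\ell}$, and since $G$ is vertically differentiable the map $w\mapsto G(t,Z+w\Ind{[t,T]},B)$ is differentiable at $w=0$ with gradient $(\partial_\ell G(t,Z,B))_{\ell=1}^\nu$. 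Composing these two differentiable maps yields \eqref{vertical chain rule}. Formula \eqref{vertical chain rule order 2} then follows by differentiating \eqref{vertical chain rule} once more: the product rule (Lemma~\ref{product rule}) splits each summand $\partial_\ell G(t,Z,B)\,\partial_iF_\ell(t,X,A)$ into a term with $\partial_{ji}F_\ell$ and a term in which $\partial_j$ acts on $\partial_\ell G(\cdot,Z,B)$, and applying \eqref{vertical chain rule} to the functional $\partial_\ell G$ (which is vertically differentiable because $G$ is twice vertically differentiable) replaces the latter by $\sum_k\partial_{\ell k}G(t,Z,B)\,\partial_jF_k(t,X,A)$.

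For the horizontal derivatives I freeze $X$ at $X^s$ and read off the increment of $H$ over $[s,t]$. With $X^s$ frozen after $s$, the path $Z=F(\cdot,X^s,A)$ is continuous and of bounded variation on $[s,t]$, with $\ud Z_\ell(r)=\sum_{i=0}^m\mathscr D_iF_\ell(r,X^s,A)\,A_i(\ud r)$ by the horizontal differentiability of $F_\ell$ (Definition~\ref{horizontal der def}). I then apply the change of variables formula of Theorem~\ref{cvfcont} to $G$ along the path $Z$ on $[s,t]$: its quadratic variation vanishes there, so the second-order term drops out, and by Lemma~\ref{Ito Stieltjes lemma} the first-order term becomes the Lebesgue--Stieltjes integral $\sum_\ell\int_s^t\partial_\ell G(r,Z,B)\,\ud Z_\ell(r)$. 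Substituting the expression for $\ud Z_\ell$ and collecting the resulting integrals against $\ud r$, $\ud A_i(r)$ and $\ud B_j(r)$ into the defining relation of Definition~\ref{horizontal der def} produces exactly \eqref{chain rule horizontal derivatives 0}--\eqref{chain rule horizontal derivatives 2}; in particular the coefficient of $\ud r$ fuses the explicit time term $\mathscr D_0G$ of $G$ with the horizontal drift $\sum_\ell\partial_\ell G\,\mathscr D_0F_\ell$ of $F$, while the $\ud B_j(r)$-coefficients are simply $\mathscr D_jG$.

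Finally, I check that $H\in\mathbb C^{1,2}_c(U,S\times W)$. Each quantity in \eqref{vertical chain rule}--\eqref{chain rule horizontal derivatives 2} is built by composition and multiplication from the derivatives of $F$ and $G$, all of which are boundedness-preserving, left-continuous, continuous at fixed times, or continuous in $X$ locally uniformly in $t$, as required; these properties pass through composition (using Lemma~\ref{regularity lemma}(a) to turn closeness of $X$ into closeness of the inner path $Z$) and through products (Lemma~\ref{product rule}), giving left-continuity and boundedness-preservation of $\nabla_XH$ and $\nabla^2_XH$, continuity at fixed times of $\mathscr DH$, and local uniform continuity in $t$ of $H$ itself. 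I expect the main obstacle to be the horizontal-derivative step: beyond the bookkeeping of the three families of coefficients, one must justify applying Theorem~\ref{cvfcont} to $G$ along $Z$ even though $Z$ is continuous only on $[s,t]$ and may carry jumps inherited from the history of $X$ before $s$; this is handled by noting that the partition-based proof of Theorem~\ref{cvfcont}, restricted to $[s,t]$, uses only the values of $Z$ on $[s,t]$, so the history before $s$ enters merely as fixed initial data to which the regularity of $G$ still applies.
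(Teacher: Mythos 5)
Your proposal is correct and follows essentially the same route as the paper's proof: the vertical formulas come from rewriting $H(t,X+he_i\Ind{[t,T]},(A,B))=G(t,F(\cdot,X,A)+v(h)\Ind{[t,T]},B)$ and applying the ordinary chain rule (then iterating with Lemma~\ref{product rule}), and the horizontal formulas come from freezing $X$ at $X^s$, noting that $C=F(\cdot,X^s,A)$ is of bounded variation on $[s,T]$, and applying Theorem~\ref{cvfcont} together with Lemma~\ref{Ito Stieltjes lemma} and the associativity of the Lebesgue--Stieltjes integral. Your closing remark about $Z$ possibly carrying jumps before $s$ addresses a point the paper passes over with ``clearly,'' and your resolution is the right one.
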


\begin{proof}First note that $H$ is well defined since  $F(\cdot,X,A)\in D([0,T],V)$ by Lemma~\ref{regularity lemma} (b). Parts (a) and (b) of that lemma actually state that $X\mapsto F(\cdot ,X,A)$ is a continuous map from $D([0,T],U)$ into $D([0,T],V)$ with respect to the corresponding supremum norms. We thus get immediately that $H$ is boundedness-preserving, left-continuous, and continuous \blue{in $X$,} locally uniformly in $t$. Similarly, we see that the functionals on the right-hand sides of \eqref{chain rule horizontal derivatives 0}, \eqref{chain rule horizontal derivatives 1}, and \eqref{chain rule horizontal derivatives 2} are boundedness-preserving and continuous at fixed times and that the expressions on the right-hand sides of \eqref{vertical chain rule} and \eqref{vertical chain rule order 2} are boundedness-preserving and left-continuous.

Next, we prove our assertions concerning vertical differentiability. To this end, let us fix $t\in[0,T]$ and define $v(h):=F(t,X+he_i\Ind{[t,T]},A)-F(t,X,A)$ for $h\in\mathbb R$. Then, since $F$ and $G$ are non-anticipative,
\begin{align*}
H\big(t,X+he_i\Ind{[t,T]},(A,B)\big)&=G\Big(t, F(\cdot,X,A)+\big(F(\cdot,X+he_i\Ind{[t,T]},A)-F(\cdot,X,A)\big)\Ind{[t,T]},B\Big)\\
&=G\big(t,F(\cdot,X,A)+v(h)\Ind{[t,T]},A),B\big).
\end{align*}
Thus, the chain rule from standard calculus implies the vertical differentiability of   $H$ and the vertical chain rule \eqref{vertical chain rule}. Iterating this argument and combining it with the product rule from Lemma~\ref{product rule} then gives \eqref{vertical chain rule order 2}.

Next, we turn to our assertions concerning the horizontal differentiability. To this end, we fix $s\in [0,T)$ and define for $t\in[0,T]$ the $\nu$-dimensional path
$C(t):=F(t,X^s,A)$. The horizontal differentiability gives that the $\ell^{\text{th}}$ component of $C$ satisfies for $t\ge s$,
$$C_\ell(t)=F_\ell(s,X,A)+\sum_{i=0}^{m}\int_s^t\mathscr{D}_iF_\ell(r,X^s,A)\ud A_i( r),
$$
where again $A_0(r)=r$. Clearly, the restriction of $C$ to $[s,T]$ belongs to $\CBV([s,T],V)$. In particular, the quadratic variation $[C]$ of $C$ exists  and vanishes identically if taken from time $s$ onward \cite[Proposition 2.2.2]{Sondermann}.  Therefore, Theorem~\ref{cvfcont} yields that for $t\in[s,T]$,
\begin{align*}
\lefteqn{H(t,X^s,(A,B))-H(s,X,(A,B))}\\
&=G(t,F(\cdot,X^s,A),B)-G(s,F(\cdot,X^s,A),B)\\
&=G(t,C,B)-G(s,C,B)\\
&=\int_s^t\nabla_YG(r,C,B)\ud C(r)+\sum_{j=0}^n\int_s^t\mathscr{D}_jG(r,C,B) \ud B_j( r),
\end{align*}
where $B_0(r)=r$. Here, $\int_s^t\nabla_YG(r,C,B)\ud C(r)$ is a Lebesgue--Stieltjes integral by Lemma~\ref{Ito Stieltjes lemma}. The associativity of the Lebesgue--Stieltjes  integral, which follows from the Radon--Nikodym theorem,  yields that
\begin{align*}
\int_s^t\nabla_YG(r,C,B)\ud C(r)&=\sum_{i=0}^m\int_s^t\sum_{\ell=1}^\nu\partial_\ell G(r,C,B)\mathscr{D}_iF_\ell(r,X^s,A)\ud A_i( r).
\end{align*}
It thus follows that $H$ admits the horizontal derivatives \eqref{chain rule horizontal derivatives 0}, \eqref{chain rule horizontal derivatives 1}, and \eqref{chain rule horizontal derivatives 2}.
\end{proof}

\begin{proof} [Proof of Theorem~\ref{assoccont}]
(a) Every $\xi_{(\ell)}$ is of the  form 
$\xi_{(\ell)}(t,X)=\nabla_X F_\ell (t, X, A_{(\ell)}),\;t\in[0,T],$ for $F_\ell \in\mathbb C^{1,2}_c(U,S_\ell)$ and some $A_{(\ell)}\in \CBV([0,T],S_\ell)$. Clearly, there is no loss of generality if we assume that $S_1=\cdots=S_\nu=S$ and $A_{(1)}=\cdots=A_{(\nu)}=A$ for some open set $S\subset\mathbb{R}^{m}$  and some $A\in \CBV([0,T],S)$; for instance, we can always take $S=S_1\times\cdots\times S_\nu$ and $A=(A_{(1)},\dots,A_{(\nu)})$.  Then, Theorem~\ref{cvfcont} yields
\begin{align*} 
&  F_\ell ({t},X ,A )-F_\ell (0,X,A)=\int_{0}^{t }\nabla_XF_\ell (s,X, A )\ud X(s)\\
&\quad+\sum_{i=0}^{m}
\int_{0}^{t}\mathscr{D}_i F_\ell (s, X , A  )\ud A_{i}(s)+\:\frac{1}{2}\sum_{i,j=1}^d\int_{0}^{t}\partial_{ij} F_\ell (s, X , A  )\ud [ X_i,X_j](s) ,
\end{align*}
where $A_{0}( s)=s$ as usual. 
Introducing
\begin{align} 
& A_{m +\ell}(t):= \sum_{i=0}^{m}
\int_{0}^{t}\mathscr{D}_i F_\ell (s, X , A  )\ud A_{i}(s)+\frac{1}{2}\sum_{i,j=1}^d\int_{0}^{t}\partial_{ij} F_\ell (s, X , A  )\ud [X_i,X_j](s)
\label{Aellkmlk+1}\end{align}
we set $\widetilde{A}:= \left(A_{1},\dots,A_{m},A_{m+1},\dots,A_{m+\nu}\right) $. With $\widetilde S:= S\times\mathbb R^\nu$, we then have $\widetilde{A}\in\CBV([0,T],\widetilde S)$ 
due to  standard properties of Lebesgue--Stieltjes   integrals. Moreover, we can  write
\begin{align}
Y_{(\ell)}(t)&=F_\ell (t,X,A )-F_\ell (0,X,A )- A_{m+\ell}(t)=:\widetilde F_\ell (t,X,\widetilde{A}).
\label{Yellca}\end{align}
The functional $\widetilde F_\ell $ is  clearly of class $\mathbb{C}^{1,2}_c(U,\widetilde S)$ with 
\begin{equation}\label{nabla tilde F = nabla F}
\begin{split}
\nabla_X\widetilde F_\ell (t,X,\widetilde{A})&=\nabla_XF_\ell (t,X,A ),\\
\mathscr{D}_i\widetilde F_\ell (t,X,\widetilde{A})&= \mathscr{D}_i {F}_{\ell}(t,X, {A})\qquad\text{for $i=0,\dots, m$,}\\
\mathscr{D}_i\widetilde F_\ell (t,X,\widetilde{A})&= -\delta_{i\ell}\qquad\text{for $i=m+1,\dots, m+\nu$,}
\end{split}
\end{equation}
where $\delta_{i\ell}$ is the Kronecker delta.
Denoting 
\[
\widetilde{F} (t,X,\widetilde{A} ):=\left( \widetilde F_1(t,X,\widetilde{A} ),\dots,\widetilde{F}_{\nu}(t,X,\widetilde{A})\right) , 
\]
the identity \eqref{Yellca} yields \eqref{Yca}.

(b) The admissible functional integrand $\eta$  can  be written  as
$\eta(t,\cdot)=\nabla_YG(t,\cdot, B)$, for some $W\subset\mathbb{R}^p$, $G\in\mathbb C^{1,2}_c(V,W)$, and $B\in \text{\sl CBV}([0,T],W) $. 
 Lemma~\ref{chain rule} and \eqref{Yca} yield for $i=1,\dots, d$,
\begin{align}
\zeta(t,X)&=\sum_{\ell=1}^\nu \eta_\ell\big(t,\widetilde{F} (\cdot ,X,\widetilde{A} )\big)\xi_{(\ell),i}(t,X)\nonumber\\
&= \sum_{\ell=1}^\nu \partial_\ell G \big((t,\widetilde{F} (\cdot,X,\widetilde{A} ),B\big)\partial_i\widetilde F_\ell(t, X,\widetilde{A})\label{H gradient}\\
&= \partial_i H(t,X,(\widetilde{A},B)),\nonumber
\end{align}
where  
$$
H(t,X,(\widetilde{A},B)):=G\big(t,\widetilde{F}(\cdot,X ,\widetilde{A}), B\big) 
$$
  belongs to $\mathbb{C}^{1,2}_c(U,S\times\mathbb R^\nu\times W)$ by Lemma~\ref{chain rule}. 
 We hence infer that $\zeta$ is indeed well defined and an admissible  functional  integrand. This completes the proof of (b)

  (c) We now assume that $Y$ satisfies \eqref{Yellcontcov}. Applying It\^o's formula in the form of Theorem~\ref{cvfcont} to $G(t,Y,B)$ yields that
 \begin{equation}\label{G Ito}
 \begin{split}
 \int_0^T\eta(t,Y)\ud Y(t)&=G(T,Y,B)-G(0,Y,B)-\sum_{i=0}^p\int_0^T\mathscr{D}_iG(t,Y,B)\ud B_i(t)\\
 &\qquad-\frac12\sum_{k,\ell=1}^\nu\int_0^T\partial_{k\ell}G(t,Y,B)\ud[Y_k,Y_\ell](t).
  \end{split}
 \end{equation}
 Our goal is to identify the right-hand side of \eqref{G Ito} with $\int_0^T\sum_{\ell=1}^\nu \eta_\ell(s,Y)\xi_{(\ell)}(s,X)\ud  X(s)$. To this end, we will separately analyze each   term on the right-hand side of   \eqref{G Ito}. 
 
 First,
 \begin{equation}\label{term 1}
  \begin{split}G(T,Y,B)-G(0,Y,B)&=G\big(T,\widetilde{F}(\cdot,X ,\widetilde{A}), B\big)-G\big(0,\widetilde{F}(\cdot,X ,\widetilde{A}), B\big) \\
 &=H(T,X,(\widetilde{A},B))-H(0,X,(\widetilde{A},B))  \end{split} 
 \end{equation}

Second, the identity \eqref{chain rule horizontal derivatives 2} yields that 
\begin{equation}\label{term 2}
\begin{split}
\lefteqn{\sum_{i=0}^p\int_0^T\mathscr{D}_iG(t,Y,B)\ud B_i(t)}\\
&=\int_0^T\mathscr D_0G\big(t,\wt F(\cdot,X,\wt A),B\big)\ud t+\sum_{j=m+\nu+1}^{m+\nu+p}\int_0^T\mathscr{D}_jH(t,X,(\widetilde{A},B))\ud B_{j-m-\nu}(t).
\end{split}
\end{equation}

Third, we analyze the rightmost term in \eqref{G Ito}. Using our assumption  \eqref{Yellcontcov} on the quadratic variation of $Y$, the associativity of the Lebesgue--Stieltjes   integral, which follows from the Radon--Nikodym theorem,  the identities \eqref{nabla tilde F = nabla F} and \eqref{Yca}, we get
 \begin{align}
\lefteqn{\frac12\sum_{k,\ell=1}^\nu \int_0^T\partial_{k\ell}G(t,Y,B)\ud[Y_k,Y_\ell](t)}\nonumber\\
&=\frac12\sum_{k,\ell=1}^\nu\sum_{i,j=1}^d\int_0^T\partial_{k\ell}G(t,Y,B)\xi_{(k),i}(t,X)\xi_{(\ell),j}(t,X)\ud[X_i,X_j](t)\nonumber\\
&=\frac12\sum_{k,\ell=1}^\nu\sum_{i,j=1}^d\int_0^T\partial_{k\ell}G(t,Y,B) \partial_iF_k (t, X, A)\partial_jF_\ell (t,X,A)\ud[X_i,X_j](t)\nonumber\\
&=\frac12\sum_{i,j=1}^d\sum_{k,\ell=1}^\nu\int_0^T\partial_{k\ell}G\big(t,\widetilde{F}(\cdot,X,\widetilde{A}),B\big) \partial_i\widetilde{F}_{(k)}(t, X,\widetilde A)\partial_j\widetilde F_\ell(t,X,\widetilde A )\ud[X_i,X_j](t)\nonumber\\
&=\frac12\sum_{i,j=1}^d\int_0^T\partial_{ij}H(t,X,(\widetilde A,B))\ud[X_i,X_j](t)\nonumber\\
&\qquad-\frac12\sum_{\ell=1}^\nu\sum_{i,j=1}^d\int_0^T\partial_\ell G\big(t,\widetilde{F}(\cdot,X,\widetilde{A}),B\big)\partial_{ij}\widetilde F_\ell (t,X,\widetilde A )\ud[X_i,X_j](t).\nonumber
 \end{align}
 Next, our definition \eqref{Aellkmlk+1} of $A_{m+\ell}$ and  the associativity of the Lebesgue--Stieltjes   integral  imply that the latter terms satisfy
 \begin{align}
 \lefteqn{\frac12\sum_{\ell=1}^\nu\sum_{i,j=1}^d\int_0^T\partial_\ell G\big(t,\widetilde{F}(\cdot,X,\widetilde{A}),B\big)\partial_{ij}\widetilde F_\ell (t,X,\widetilde A )\ud[X_i,X_j](t)}\nonumber\\
 &=\sum_{\ell=1}^\nu\int_0^T\partial_\ell G\big(t,\widetilde{F}(\cdot,X,\widetilde{A}),B\big)\ud A_{m+\ell}(t)\label{fin var terms a1}\\
 &\qquad-\sum_{\ell=1}^\nu\sum_{i=0}^{m}\int_0^T\partial_\ell G\big(t,\widetilde{F}(\cdot,X,\widetilde{A}),B\big)\mathscr{D}_iF_\ell (t,X,A )\ud A_{i}(s)\nonumber
 \end{align}
The identity \eqref{nabla tilde F = nabla F} allows us to replace $\mathscr{D}_iF_\ell (t,X,A )$ by $\mathscr{D}_i\widetilde F_\ell (t,X,\widetilde A )$ in the rightmost term. Thus, using \eqref{chain rule horizontal derivatives 0} and \eqref{chain rule horizontal derivatives 1}, we get
\begin{equation}\label{Horizontal H terms}
\begin{split}
\lefteqn{\sum_{\ell=1}^\nu\sum_{i=0}^{m}\int_0^T\partial_\ell G\big(t,\widetilde{F}(\cdot,X,\widetilde{A}),B\big)\mathscr{D}_iF_\ell (t,X,A )\ud A_{i}(s)}\\
&=\sum_{i=0}^{m}\int_0^T\mathscr{D}_i H(t,X,(\widetilde A,B))\ud A_i(t)-\int_0^T\mathscr D_0G\big(t,\wt F(\cdot,X,\wt A),B\big)\ud t
\end{split}
\end{equation}
 For the first term on the right-hand side of \eqref{fin var terms a1}, the third identity in \eqref{nabla tilde F = nabla F}  gives
 $$\sum_{\ell=1}^\nu\int_0^T\partial_\ell G\big(t,\widetilde{F}(\cdot,X,\widetilde{A}),B\big)\ud A_{m+\ell}(t)=-\sum_{\ell=1}^\nu\int_0^T\mathscr{D}_{m+\ell}H(t,X,(\widetilde A,B))\ud A_{m+\ell}(t).
 $$
 Thus, the expression in \eqref{fin var terms a1} equals
 $$-\sum_{i=0}^{m+\nu}\int_0^T\mathscr{D}_i H(t,X,(\widetilde A,B))\ud A_i(t).
 $$
 We have thus shown that
 \begin{equation}\label{term 3}
 \begin{split}
 \lefteqn{\frac12\sum_{k,\ell=1}^\nu \int_0^T\partial_{k\ell}G(t,Y,B)\ud[Y_k,Y_\ell](t)}\\
 &=\frac12\sum_{i,j=1}^d\int_0^T\partial_{ij}H(t,X,(\widetilde A,B))\ud[X_i,X_j](t)+\sum_{i=0}^{m+\nu}\int_0^T\mathscr{D}_i H(t,X,(\widetilde A,B))\ud A_i(t).
 \end{split}
 \end{equation}

 Finally, plugging the results of \eqref{term 1}, \eqref{term 2}, \eqref{Horizontal H terms}, and \eqref{term 3} into \eqref{G Ito} yields that 
 \begin{align*}
  \int_0^T\eta(t,Y)\ud Y(t)&=H(T,X,(\widetilde{A},B))-H(0,X,(\widetilde{A},B))-\sum_{i=0}^{m+\nu+p}\int_0^T\mathscr{D}_i H(t,X,(\widetilde A,B))\ud A_i(t)\\
  &\qquad -\frac12\sum_{i,j=1}^d\int_0^T\partial_{ij}H(t,X,(\widetilde A,B))\ud[X_i,X_j](t),
 \end{align*}
 which by Theorem~\ref{cvfcont} and \eqref{H gradient} is equal to  $$\int_0^T\nabla_X H(t,X,(\widetilde A,B))\ud X(t)=\int_0^T\sum_{\ell=1}^\nu \eta_\ell(s,Y)\xi_{(\ell)}(s,X)\ud  X(s).$$
This concludes the proof.\end{proof}

\begin{proof}[Proof of Corollary~\ref{ass cor}] That the expression in 
\eqref{ass cor func int eq} is an admissible functional integrand follows immediately from Theorem~\ref{assoccont}  (b). Next, we define $B\in\CBV([0,T],\mathbb R^\nu)$ through
\begin{align*}
 B_\ell(t)=\sum_{i=0}^m\int_0^t\mathscr D_iF_\ell(s,X,A)\ud A_i(s)+\frac12\sum_{i,j=1}^d\int_0^t\partial_iF_\ell(s,X,A)\partial_jF_\ell(s,X,A)\ud[X_i,X_j](s),
\end{align*}
where $\ell=1,\dots,\nu$. When letting $\wt Y(t):=Y(t)-B(t)$, then \cite[Remark 8]{Schied13} yields that $\wt Y$ admits the continuous quadratic variation $[\wt Y]=[\blue{ Y}]$. Moreover, $\wt Y$ is of the form 
$$\wt Y_{\ell}(t)=\int_0^t\xi_{(\ell)}(s,X)\ud X(s),\qquad\ell=1,\dots,\nu,
$$
for $\xi_{(\ell)}(t,X):=\nabla_X F_\ell(t,X,A)$.
We next let $\wt A=(A,  B)$, $\wt F_\ell(t,X,\wt A):=F_\ell(t,X,A)-B_\ell$, and $\wt \eta(t,Z)=\eta(t,Z+B)$ for $Z\in D([0,T],\mathbb R^\nu)$. Then $\wt\eta$ is an admissible functional integrand. To see this, let $W\in\mathbb R^p$ be open and $G\in\mathbb C^{1,2}_c(\mathbb R^\nu,W)$ be such that $\eta(t,Z)=\nabla_ZG(t,Z,C)$ for some $C\in\CBV([0,T],W)$. We let $\wt G(t,Z,(C,B)):=G(t,Z+B,C)=G(t,\Phi(\cdot,Z,B),C)$ for $\Phi(t,Z,B)=Z(t)+B(t)$. Lemma
\ref{chain rule} now easily yields that $\wt G\in\blue{\mathbb C^{1,2}_c}(\mathbb R^\nu,W\times \mathbb R^\nu)$ and that $\nabla_Z\wt G(t,Z,(C,B))=\wt\eta(t,Z)$, and so $\wt\eta$ is indeed an admissible functional integrand.

Next,
 Lemma~\ref{Ito Stieltjes lemma}  yields that
\begin{align*}
\int_0^T\eta(s,Y)\ud Y(s)&=\int_0^T\wt \eta(s,\wt Y)\ud \wt Y(s)+\int_0^t\eta(s,Y)\ud B(s).
\end{align*}
Applying Theorem~\ref{assoccont} to the first integral on the right-hand side gives
\begin{align*}
\int_0^T\wt \eta(s,\wt Y)\ud \wt Y(s)&=\int_0^t\sum_{\ell=1}^\nu\wt \eta_\ell\big(s, \wt F_\ell(s,X,A)\big)\xi_{(\ell)}(s,X)\ud X(s)\\
&=\int_0^t\sum_{\ell=1}^\nu  \eta_\ell\big(s,   F_\ell(s,X,A)\big)\nabla_XF_{\ell}(s,X)\ud X(s).
\end{align*}
Using the definition of $B$ and applying once again the associativity of the Lebesgue--Stieltjes integral now yields the result.
\end{proof}

\noindent{\bf Acknowledgement:}  I.V.~gratefully acknowledges funding from Deutsche Forschungsgemeinschaft (DFG) through RTG 1953. A.S.~gratefully acknowledges partial  supported by DFG through RTG 1953 and  by NSERC through a Discovery Grant. Both authors thank an anonymous referee for pointing out several errors in a previous version of this manuscript.

 \parskip-0.5em\renewcommand{\baselinestretch}{0.9}\small


\begin{thebibliography}{99}
\bibitem{Ananova}A.~Ananova and R.~Cont: Pathwise integration with respect to paths of finite quadratic variation. \emph{Journal de Math\' ematiques Pures et Appliqu\' ees.} 107(6):737-57, 2017.

\bibitem{bender}C.~Bender, T.~Sottinen and E.~Valkeila: Pricing by hedging and no-arbitrage beyond semimartingales. \emph{Finance Stoch., 12(4):441-468, 2008.}
\bibitem{bick} A.~Bick and W.~Willinger: Dynamic spanning without probabilities. \emph{Stochastic Process. Appl., 50(2):349-374, 1994.}
\bibitem{CF} R.~Cont and D.-A.~Fourni\' e: Change of variable formulas for non-anticipative functionals on path space, \emph{Journal of Functional Analysis 259 (2010) 1043-1072.}
\bibitem{CF1} R.~Cont and D.-A.~Fourni\' e: A functional extension of the It\^o formula, \emph{C.~R.~Math.~Acad.~Sch.~Paris, 348(1-2): 5-61, 2010.}
\bibitem{CF2} R.~Cont and D.-A.~Fourni\' e: Functional It\^o calculus and stochastic integral representation of martingales, \emph{Ann.~Prob., 41(1): 109-133, 2013.}
\bibitem{Russo} A.~Cosso and F.~Russo: A regularization approach to functional It\^o calculus and strong-viscosity solutions for path-dependent PDEs, \emph{ arXiv, 20 Jan 2014.}
\bibitem{Davis} M.~Davis, J.~Obl\'oj and V.~Raval: Arbitrage bounds for prices of weighted variance swaps. \emph{Math. Finance, 24(4):821-854, 2014.}
\bibitem{GR1} C.~Di Girolami and F.~Russo: Infinite dimensional stochastic calculus via regularization and applications, \emph{Preprint HAL-INRIA, http://hal.archives-ouvertes.fr/inria-00473947/fr/(Unpublished), 2010.}
\bibitem{GR2} C.~Di Girolami and F.~Russo: Clark-Okone type formula for non-semimartingales with finite quadratic variation, \emph{C.~R.~Math.~Acad.~Sci.~Paris, 349(3-4): 209-214, 2011.}
\bibitem{GR4} C.~Di Girolami and F.~Russo: Generalized covariation and extended Fukushima decomposition for Banach space valued processes, Applications to windows of Dirichlet processes,  \emph{Infin.~Dimens.~Anal.~Quantum Probab.~Relat.~Top., 15(2): 1250007, 50, 2012.}
\bibitem{Fabbri} C.~Di Girolami, G.~Fabbri and F.~Russo: The covariation of Banach space valued processes and applications, \emph{Metrika} 77.1: 51-104, 2014.
\bibitem{GR3} C.~Di Girolami and F.~Russo: Generalized covariation for Banach space valued processes, It\^o formula and applications, \emph{Osaka Journal of Mathematics, 51(3), 2014.}
\bibitem{Dupire} B.~Dupire: Functional It\^o calculus, \emph{Portfolio Research Paper 2009-04, Bloomberg, 2009.}
\bibitem{Ito_F} H.~F\"ollmer: Calcul d' It\^o sans probabilit\'es, \emph{ Seminar on Probability XV, 1979/80.} 
\bibitem{F2001} H.~F\"ollmer: Probabilistic aspects of financial risk. \emph{In European Congress of Mathematics, Vol. I (Barcelona, 2000), volume 201 of Progr. Math., pages 21-36. Birkh\"auser, Basel, 2001.} 
\bibitem{F&S} H.~F\"ollmer and A.~Schied: Probabilistic aspects of finance, \emph{Bernoulli 19(4): 1306-1326, 2013.}

\bibitem{Jeffrey}Jeffery, R. L.
Generalized integrals with respect to functions of bounded variation. 
\emph{Canad. J. Math.} 10: 617-626, 1958. 
\bibitem{Russo2} F.~Russo and P.~Vallois: Elements of stochastic calculus via regularization, \emph{S\'emiaire de Probabilit\' es: Lecture Notes in Math.,} Vol.~1899, No.~ 4, 1576-1610.
\bibitem{Schied07} A.~Schied and M.~Stadje: Robustness of delta hedging for path-dependent options in local volatility models. \emph{J. Appl. Probab., 44(4):865-879, 2007.}
\bibitem{Schied13}A.~ Schied: Model-free CPPI. \emph{Journal of Economic Dynamics and Control 40,  84-94, 2014.}
\bibitem{Schied_Takagi} A.~Schied: On a class of generalized Takagi functions with a linear pathwise quadratic variation, \emph{Journal of Mathematical Analysis and Applications 433, 947-990, 2016.}
\bibitem{SPVol15} A.~Schied, L.~Speiser, and I.~Voloshchenko: Model-free portfolio theory and its functional master formula,  \emph{arXiv: 1606.03325}.
\bibitem{Sondermann} D.~Sondermann: Introduction to stochastic calculus for finance. A new didactic approach, volume 579 of \emph{Lecture Notes in Economics and Mathematical Systems. Springer-Verlag, Berlin, 2006}.
\bibitem{Voloshchenko} I.~Voloshchenko:  On pathwise functional It\^o calculus and its applications to mathematical finance. Doctoral dissertation, University of Mannheim, 2016.
\end{thebibliography}
\end{document}